\newtheorem{theorem}{Theorem}[section]
\newtheorem{lemma}[theorem]{Lemma}
\newtheorem{corollary}[theorem]{Corollary}
\theoremstyle{definition}
\newtheorem{definition}[theorem]{Definition}
\newtheorem{example}[theorem]{Example}
\theoremstyle{remark}
\numberwithin{equation}{section}
\newcommand{\supp}{\operatorname{supp}}
\newcommand{\Lip}{\operatorname{Lip}}
\newcommand{\lip}{{\rm lip}}
\newcommand{\lo}{\longrightarrow}
\newcommand{\BC}{\Bbb C}
\newcommand{\BN}{\Bbb N}
\newcommand{\coz}{{\rm coz}}
\begin{document}
\setcounter{page}{1}

\title[]{Jointly separating maps between vector-valued function spaces}

\author[Z. Pourghobadi, M. Najafi Tavani, F. Sady]{Z. Pourghobadi, M. Najafi Tavani, F. Sady}

\address{Department of Mathematics, Faculty of Basic Sciences, Islamic Azad University, Science and Research Branch,  Tehran, Iran}
\email{\textcolor[rgb]{0.00,0.00,0.84}{zibapourghobadi@gmail.com}}

\address{Department of Mathematics, Faculty of Basic Sciences, Islamic Azad University, Islamshahr Branch, Islamshahr, Tehran, Iran}
\email{\textcolor[rgb]{0.00,0.00,0.84}{najafi@iiau.ac.ir}}

 \address{Department of Pure Mathematics, Faculty of Mathematical Sciences, Tarbiat Modares University, Tehran 14115--134, Iran}
 \email{\textcolor[rgb]{0.00,0.00,0.84}{sady@modares.ir}}

\subjclass[2010]{Primary 47B38; Secondary 47B48, 46J10.}

\keywords{regular Banach algebra, vector-valued  function spaces,
Lipschitz functions,  separating maps, biseparating maps}


\begin{abstract}
Let $X$ and $Y$ be compact Hausdorff spaces, $E$ and $F$ be real or complex Banach spaces, and $A(X,E)$ be a subspace of $C(X,E)$. In this paper we study linear operators $S,T: A(X,E) \lo C(Y,F)$ which are jointly separating, in the sense that $\coz(f) \cap
\coz(g) = \emptyset$ implies that $\coz(Tf) \cap
\coz(Sg)=\emptyset$. Here $\coz(\cdot)$ denotes the cozero set of
a function.  We characterize the general form of such maps between certain class of vector-valued (as well as scalar-valued) spaces of  continuous functions including spaces of  vector-valued Lipschitz functions, absolutely continuous functions and continuously differentiable functions. The results can be applied for a pair $T:A(X) \lo A(X)$ and $S:A(X,E) \lo A(X,E)$ of linear operators, where $A(X)$ is a regular Banach function algebra on $X$, such that $f\cdot g=0$ implies $Tf \cdot Sg=0$, for $f\in A(X)$ and $g\in A(X,E)$. If $T$ and $S$ are jointly separating bijections between Banach algebras of scalar-valued functions  of this class, then they induce a homeomorphism between $X$ and $Y$ and, furthermore,  $T^{-1}$ and  $S^{-1}$ are also jointly separating maps.
\end{abstract}
\maketitle

\section{introduction}
Let $\Bbb K$ denote the field of real or complex numbers. Let $X, Y$ be topological spaces,  and $A(X,E)$, $A(Y,F)$ be spaces of continuous functions on $X$ and $Y$, respectively, taking their values in $\Bbb K$-normed spaces $E$ and $F$. A linear operator $T : A(X,E)\lo A(Y,F)$ is called {\em separating} if for each $f, g
\in A$ we have $\|Tf(\cdot)\| \|Tg(\cdot)\|\equiv 0$ whenever
$\|f(\cdot)\| \|g(\cdot)\|\equiv 0$ or, equivalently,  $\coz(Tf)
\cap \coz (Tg) = \emptyset $ whenever $\coz(f) \cap \coz(g) =
\emptyset $. Here $\coz(\cdot)$ denotes the cozero set of a
function.  A bijective separating map $T$ whose inverse is also separating is called a {\em biseparating} map.

Weighted composition operators are standard examples of separating maps between spaces of functions. The study of separating maps on various spaces of continuous functions (in either of  scalar or vector valued case) has attracted a considerable interest for many years, see for example \cite{abcev, aev,  abn, aj, v, vw,  kn}. The notation of separating maps between spaces of scalar-valued continuous functions defined on compact Hausdorff spaces was introduced in \cite{bnt}. In \cite{j}, K. Jarosz described the general form of a linear separating map from $C(X)$ to $C(Y) $ where $X$ and $Y$ are compact Hausdorff spaces. This result has been generalized in \cite{fh} and \cite{jw} to the locally compact case. Characterization and automatic continuity of separating maps between regular commutative semisimple Banach algebras were studied in \cite{f}.

The vector-valued case has also been considered  in some contexts, such as \cite{gjw, hbn, jr, vw}. In \cite{gjw} authors considered biseparating linear bijections between spaces of vector-valued continuous functions and  showed that for compact Hausdorff spaces $X, Y$ and Banach spaces $E,F$, any biseparating linear map $T$ from  the Banach space $C(X,E)$ of all continuous $E$-valued functions defined on $X$, onto $C(Y, F)$, induces a homeomorphism between $X$ and $Y$. They also described the general form of such maps. Similar results were given in \cite{vw, vvw} for the case where $T$ is a linear biseparating map between little Lipschitz function spaces. Separating linear bijections on spaces of vector-valued absolutely continuous functions defined on compact subsets of the real line were studied in \cite{du}.

In \cite{abcev}  the authors are concerned with continuous bilinear maps $\phi$ from $C^1[0,1] \times C^1[0,1]$ to a Banach space $E$ such that $f\cdot g=0$ implies $\phi(f,g)=0$. A similar problem has been considered in  \cite{aev} for the Banach algebra of  little Lipschitz functions instead of $C^1([0,1])$. Clearly any separating map $T: \mathcal A \lo \mathcal B$  between spaces of scalar-valued functions
 $\mathcal A$ and $\mathcal B$, satisfies this implication
for $\phi(f,g)=Tf \cdot Tg$.

In this paper, we study jointly separating linear maps on vector-valued spaces of functions, having $\mathcal{A}$-module structure for some regular Banach function algebra $\mathcal{A}$. For compact Hausdorff spaces $X,Y$, Banach spaces $E,F$ and a certain subspace $A(X,E)$ of $C(X,E)$ we first study bilinear maps $\phi: A(X,E)\times A(X,E) \lo C(Y)$ of the form $\phi(f,g)=Tf
\cdot Sg$, $f,g \in A(X,E)$, for some linear maps $T,S: A(X,E)\lo
C(Y)$, satisfying the implication
\[ \coz(f)\cap \coz(g)=\emptyset \Rightarrow \phi(f,g)=0.\]
Such linear maps $S,T$ are said to be {\em jointly separating}. Then we study jointly separating maps $T,S: A(X,E) \lo A(Y,F)$ between spaces of vector-valued continuous functions in the sense that $\coz(f)\cap \coz(g)=\emptyset$ implies $\coz(Tf)\cap
\coz(Sg)=\emptyset$. Under some extra conditions,  jointly
separating maps are  continuous and induce some  homeomorphisms between the underlying topological spaces. The results can be applied for a pair  $T:A(X) \lo A(X)$ and $S:A(X,E) \lo A(X,E)$ of linear operators, where $A(X)$ is a regular Banach function algebra on $X$, such that $f\cdot g=0$ implies $Tf \cdot Sg=0$, for $f\in A(X)$ and $g\in A(X,E)$. An automatic continuity property will also be given in finite dimensional case.

\section{preliminaries}

For (real or complex) topological vector spaces $E$ and $F$, we use the notation $L(E,F)$ for  the space of all continuous linear operators  from $E$ to $F$ and we simply write $L(E)$ for $L(E,E)$. For a normed space $E$, $E^*$ is its dual space and $E_w$ stands for the space $E$ equipped with the weak topology.

 For a compact Hausdorff space $X$ and a Banach space $E$ over
$\Bbb K=\Bbb C$ or $\Bbb R$, $C(X,E)$ is the Banach space of all continuous $E$-valued functions on $X$, and $\|\cdot\|_\infty$ denotes the supremum norm on $C(X,E)$. Also $C(X)$ denotes the space of complex-valued continuous functions on $X$. For $f\in C(X,E)$, $\coz(f)$  denotes the cozero set of $f$ and ${\rm supp}(f)=\overline{\coz(f)}$. For $e\in E$, $\tilde{e}$ is the constant function in $C(X,E)$ which sends each point $x \in X$ to $e$. A complex subalgebra $A(X)$ of $C(X)$ is a {\em function algebra} on $X$, if it contains constants and separates the points of $X$. A {\em Banach function algebra} is a function algebra on $X$ which is a Banach algebra with respect to a norm.

A Banach function algebra $A$ on $X$ is {\em regular} if for any point  $\varphi$  and closed subset $F$ in the maximal ideal space of $A$  with $\varphi \notin F$, there exists $f\in A$ such that its Gelfand transform $\widehat{f}$ satisfies $\widehat{f}(\varphi)=1$ and $\widehat{f}=0$ on $F$. If $A$ is  a regular Banach function algebra on $X$, then for any pair $K_1$ and $K_2$ of nonempty disjoint compact subsets of $X$ there exists $f\in A$ with $f(K_1)=\{1\}$ and $f(K_2)= \{0\}$. Moreover, for any compact subset $K$ of $X$ and open cover $\{U_1, ... , U_n\}$ of $K$, there exist $f_1, ... , f_n \in A$ such that $\sum_{i=1}^nf_i(x)=1$ on $K$ and ${\rm supp}(f_i)\subseteq U_i$, for each $i=1,...,n$.



 Let $X,Y$ be compact Hausdorff spaces, $E,F$ be
Banach spaces over $\Bbb K$ and $A(X,E), A(Y,F)$ be subspaces of $C(X,E)$ and $C(Y,F)$, respectively.
\begin{definition}
 We say that  linear operators $T,S: A(X,E)\lo A(Y,F)$ are {\em jointly
 separating  with respect to a point $v^{*}\in F^*$} if $(v^{*}\circ Tf) \cdot
(v^{*}\circ Sg)=0$ for all $f,g \in A(X,E)$ with $\coz (f) \cap
\coz (g) =\emptyset$. We also say that $T$ and $S$ are {\em
jointly separating} if $\coz (Tf) \cap \coz (Sg) =\emptyset$ for all $f,g \in A(X,E)$ with $\coz (f) \cap \coz (g) =\emptyset$.
\end{definition}

We should note that  linear maps $T,S: A(X,E)\longrightarrow A(Y,F)$ are jointly separating if an only if they are jointly separating with respect to all $v^*\in F^*$. The ``only if'' part is trivial. For the ``if part''  assume that $T,S$ are jointly separating with respect to all $v^*\in F^*$. Assume on the contrary that there exists a point $y\in \coz (Tf) \cap \coz (Sg)$ where $f,g \in A(X,E)$  such that $\coz (f) \cap \coz (g) =\emptyset$. Setting $Tf(y)=e_1$ and $Sg(y)=e_2$ we can easily find $v^*\in F^*$ such that $v^*(e_1)\neq 0$ and $v^*(e_2)\neq 0$. Hence $ v^{*}( Tf(y))\, v^{*}( Sg(y))\neq 0$,  a contradiction.

\section{jointly separating maps with values in C(Y)}
Throughout this section we assume that $X,Y$ are compact Hausdorff spaces, $E$ is a Banach space over $\Bbb K$, $A(X,E)\subseteq C(X,E)$ is a subspace and $(A(X),\|\cdot\|_{A(X)})$  is a regular Banach function algebra on $X$ such that $A(X)\cdot A(X,E)
\subseteq A(X,E)$. We should note that   $A(X) \cdot
A(X,E)=\{f\cdot h: f\in A(X), h\in A(X,E)\}$, where  for $f\in A(X)$ and $h\in A(X,E)$, $f\cdot h$ is an element in $C(X,E)$ defined by $f\cdot h(x)=f(x)h(x)$, $x\in X$. We also assume that $T, S : A(X, E) \lo C(Y)$ are jointly separating linear maps.

\begin{definition}\label{def2}
Let $Y_1\subseteq Y$ be  defined by $$Y_1=\big(\cup_{f \in A(X,E)} \coz (Tf)\big) \cap \big(\cup_{f \in A(X,E)} \coz (Sf )\big).$$ We call a  point $x \in X$ a {\em support point} of $y \in Y_1$ if for each neighborhood $V$ of $x$ there exists $f \in A(X,E)$ such that $\coz(f) \subseteq V$ and $$ Tf(y) \neq 0 \ \ {\rm or} \ \   Sf(y) \neq 0.$$
\end{definition}

\begin{lemma}\label{lem1}
The set of support points of  each $y \in Y_1$ is non-empty.
\end{lemma}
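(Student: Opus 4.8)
The plan is to argue by contradiction, leveraging the compactness of $X$ together with the partition-of-unity property that the regularity of $A(X)$ provides, and the module structure $A(X)\cdot A(X,E)\subseteq A(X,E)$. Fix $y\in Y_1$ and suppose, towards a contradiction, that $y$ admits no support point. By the negation of Definition~\ref{def2}, for every $x\in X$ there is an open neighbourhood $V_x$ of $x$ such that \emph{every} $f\in A(X,E)$ with $\coz(f)\subseteq V_x$ satisfies both $Tf(y)=0$ and $Sf(y)=0$. The family $\{V_x\}_{x\in X}$ is then an open cover of the compact space $X$, so I would extract a finite subcover $V_{x_1},\dots,V_{x_n}$.

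Next I would invoke regularity of $A(X)$ in the form recorded in the preliminaries: for the compact set $K=X$ and the open cover $\{V_{x_1},\dots,V_{x_n}\}$ there exist $p_1,\dots,p_n\in A(X)$ with $\sum_{i=1}^{n}p_i\equiv 1$ on $X$ and $\supp(p_i)\subseteq V_{x_i}$ for each $i$. Now take an arbitrary $f\in A(X,E)$. The module hypothesis gives $p_i\cdot f\in A(X,E)$, while $\coz(p_i\cdot f)\subseteq\coz(p_i)\subseteq\supp(p_i)\subseteq V_{x_i}$. Hence the defining property of $V_{x_i}$ forces $T(p_i\cdot f)(y)=0$ and $S(p_i\cdot f)(y)=0$ for every $i$.

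Finally, since $\sum_{i}p_i\equiv 1$ on $X$ we have $f=\sum_{i=1}^{n}p_i\cdot f$, so linearity of $T$ and $S$ yields $Tf(y)=\sum_{i=1}^{n}T(p_i\cdot f)(y)=0$ and likewise $Sf(y)=0$. As $f\in A(X,E)$ was arbitrary, this shows $y\notin\bigcup_{f\in A(X,E)}\coz(Tf)$, contradicting $y\in Y_1$. I do not anticipate a serious obstacle here; the only points deserving care are verifying that each $p_i\cdot f$ really lands in $A(X,E)$ (guaranteed by the module condition) and that the local annihilation encoded by $V_x$ survives passage through the finite partition of unity. The genuine content of the lemma lies in the earlier design of the notion of \emph{support point}, which is exactly what makes a non-support point force local vanishing compatible with a finite cover; once that is set up, the remainder is routine linearity and compactness.
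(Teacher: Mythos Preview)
Your proof is correct and follows essentially the same route as the paper's: negate the support-point condition at every $x$, extract a finite subcover by compactness, use the regular partition of unity in $A(X)$ subordinate to that cover, and decompose an arbitrary $f$ as $\sum_i p_i\cdot f$ to force $Tf(y)=Sf(y)=0$, contradicting $y\in Y_1$. The only cosmetic difference is that the paper phrases the partition with $\coz(f_i)\subseteq V_{x_i}$ rather than $\supp(p_i)\subseteq V_{x_i}$, but either containment suffices for the argument.
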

\begin{proof}
Suppose, on the contrary, that for each $x \in X$ there exists a neighborhood $V_x$ of $x$ such that for each $f \in A(X,E)$ with $\coz(f) \subseteq V_x$ we have $Tf(y))= Sf(y) = 0$. Since $X
\subseteq \cup_{x\in X} V_x$,  there exist finitely many points
$x_1 , ..., x_n\in X$ such that  $X \subseteq \cup_{i=1}^n V_{x_i}$. By the regularity of  $A(X)$, there exist functions $f_1, ..., f_n\in A(X)$ such that for each $1\leq i \leq n$ we have $\coz(f_i) \subseteq V_{x_i}$ and $\sum_{i = 1}^n f_i =1$ on $X$. Clearly for each $f \in A(X,E)$ we have $\sum_{i = 1}^n f_i \cdot f =f$. Furthermore, since $\coz(f_i\cdot f) \subseteq V_{x_i}$, we have $T(f_i \cdot f)(y)=  S(f_i\cdot f)(y) = 0$. Thus $ Tf(y)= Sf(y) = 0$ for all $f \in A(X,E)$, which contradicts to the fact that $y \in Y_1$.
\end{proof}

\begin{lemma}\label{lem2}
Let $x \in X$ be a support point of $y \in Y_1$. Then for each neighborhood $V$ of $x$ there exist $f, g \in A(X,E)$ such that $\coz(f)\cup \coz(g)\subseteq V$ and, furthermore, $ Tf(y)\neq 0$ and $  Sg(y)\neq 0$.
\end{lemma}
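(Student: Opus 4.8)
The plan is to prove the two assertions $Tf(y)\neq 0$ and $Sg(y)\neq 0$ \emph{separately}, since the functions $f$ and $g$ are unrelated. Note that the hypotheses are symmetric under interchanging $T$ and $S$ (if $\coz(f)\cap\coz(g)=\emptyset$ then, applying the joint separation with the roles of $f$ and $g$ swapped, $\coz(Sf)\cap\coz(Tg)=\emptyset$; and both $Y_1$ and the notion of support point are symmetric in $T,S$). Hence it suffices to produce $f\in A(X,E)$ with $\coz(f)\subseteq V$ and $Tf(y)\neq 0$, the corresponding $g$ for $S$ being obtained verbatim after the interchange. The heart of the matter is that the definition of a support point only yields a function on which $T$ \emph{or} $S$ is nonzero at $y$, whereas here I must pin down $T$ specifically; the idea is to use joint separation together with the membership $y\in Y_1$ to rule out the degenerate possibility that $T$ vanishes near $x$.

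So I would suppose, towards a contradiction, that $Tf(y)=0$ for every $f\in A(X,E)$ with $\coz(f)\subseteq V$. First, using that the compact Hausdorff space $X$ is normal, choose an open set $W$ with $x\in W\subseteq \overline W\subseteq V$, and then apply the partition of unity property of the regular algebra $A(X)$ to the open cover $\{V,\,X\setminus\overline W\}$ of $X$ to obtain $u,w\in A(X)$ with $u+w=1$ on $X$, $\supp(u)\subseteq V$ and $\supp(w)\subseteq X\setminus\overline W$. Since $y\in Y_1$, there is some $h\in A(X,E)$ with $Th(y)\neq 0$. Writing $h=u\cdot h+w\cdot h$, where both summands lie in $A(X,E)$ by the module hypothesis $A(X)\cdot A(X,E)\subseteq A(X,E)$, and noting $\coz(u\cdot h)\subseteq\supp(u)\subseteq V$, the standing assumption forces $T(u\cdot h)(y)=0$, whence by linearity $T(w\cdot h)(y)=Th(y)\neq 0$, while $\coz(w\cdot h)\subseteq\supp(w)\subseteq X\setminus\overline W$.

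To finish, I would invoke the support point property at the neighborhood $W$: it produces $g\in A(X,E)$ with $\coz(g)\subseteq W$ and either $Tg(y)\neq 0$ or $Sg(y)\neq 0$; since $\coz(g)\subseteq W\subseteq V$, the standing assumption kills the first alternative, so $Sg(y)\neq 0$. Now $\coz(w\cdot h)\subseteq X\setminus\overline W$ and $\coz(g)\subseteq W$ are disjoint, so joint separation gives $\coz(T(w\cdot h))\cap\coz(Sg)=\emptyset$; evaluating at $y$ yields $T(w\cdot h)(y)=0$ or $Sg(y)=0$, contradicting that both are nonzero. This contradiction establishes the existence of $f$ with $\coz(f)\subseteq V$ and $Tf(y)\neq 0$; the analogous argument with $T$ and $S$ interchanged (and using the other factor of $Y_1$) gives the required $g$, and together they prove the lemma, both cozero sets lying inside $V$. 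The main obstacle, as flagged, is bridging the disjunction in the definition of support point, which the joint separation hypothesis and the localization through $W$ resolve.
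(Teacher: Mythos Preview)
Your proof is correct and uses essentially the same ingredients as the paper's: a contradiction assumption, a shrunken neighborhood $W\subseteq\overline W\subseteq V$, a partition of unity subordinate to $\{V,X\setminus\overline W\}$, the support-point property to produce an $S$-nonvanishing function with cozero in $W$, and joint separation to derive the contradiction. The only difference is organizational: the paper first obtains the $S$-nonvanishing function and then uses the partition of unity to show $Tf(y)=0$ for \emph{every} $f\in A(X,E)$ (contradicting $y\in Y_1$), whereas you first extract a single $h$ with $Th(y)\neq 0$ from $y\in Y_1$, localize it outside $\overline W$, and reach a direct contradiction with joint separation---a slightly more economical ordering of the same ideas.
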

\begin{proof}
We need only to find $f\in A(X,E)$ such that $\coz(f)\subseteq V$ and $Tf(y)\neq 0$, since $g$ is given in a similar manner. Assume on the contrary that there exists a neighborhood $V$ of $x$ such that for each $f \in A(X,E)$ with $\coz(f) \subseteq V$ we have $Tf(y)=0$. Choose a neighborhood $U$ of $x$ with $\overline{U}\subseteq V$. Since $x$ is a support point of $y$, there exists a function $f \in A(X,E)$ such that $\coz(f)
\subseteq U$ and $$  Tf(y) \neq 0 \ \ {\rm or} \ \  Sf(y) \neq
0.$$ Since by hypothesis, $ Tf(y)=0$ we get $Sf(y) \neq 0$. We note that for each  $h \in A(X,E)$ if $\coz(h) \subseteq V$ then, by hypothesis, $  Th(y)=0$ and if $\coz(h) \subseteq X \setminus
\overline{U}$ then, $\coz(f) \cap \coz(h)=\emptyset$ and
consequently,  we have again $  Th(y)=0$.

Now since  $X\subseteq V \cup (X\backslash \overline{U})$, there exists $f_1,f_2 \in A(X)$ such that $f_1+f_2 =1$ on $X$ and, furthermore,  $\coz(f_1) \subseteq V$ and $\coz(f_2) \subseteq X\backslash \overline{U}$.  Then for each $f \in A(X,E)$ we have $f\cdot f_1 + f \cdot f_2=f$ and $T(f_i \cdot f)(y)= 0$, for $i=1,2$,  since $\coz(f_1\cdot f) \subseteq V$ and $\coz(f_2\cdot f) \subseteq X\backslash \overline{U}$. But this clearly implies that $ Tf(y)= 0$ for all $f \in A(X,E)$, which contradicts to the fact that $y \in Y_1$.
\end{proof}

\begin{corollary}\label{coro1}
Let $x \in X$ be a support point of $y \in Y_1$ and $f \in A(X,E)$. Then $x \in {\rm supp}(f)$ whenever $ Tf(y)\neq 0$ or $Sf(y)\neq 0$.
\end{corollary}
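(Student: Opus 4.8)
The plan is to establish the contrapositive: assuming $x \notin \supp(f)$, I will show that $Tf(y)=0$ and $Sf(y)=0$ hold simultaneously, which is logically equivalent to the asserted implication. First, since $\supp(f)=\overline{\coz(f)}$ is closed and $x$ lies outside it, I can choose an open neighborhood $V$ of $x$ with $V \cap \coz(f)=\emptyset$, so that in particular $\coz(f)$ is disjoint from $\coz(h)$ for any $h\in A(X,E)$ with $\coz(h)\subseteq V$.

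Next I would invoke Lemma \ref{lem2}. Because $x$ is a support point of $y$, that lemma applied to the neighborhood $V$ supplies two functions $g, h \in A(X,E)$ with $\coz(g) \cup \coz(h) \subseteq V$ such that $Tg(y)\neq 0$ and $Sh(y)\neq 0$. The availability of two separate witnesses---one detected by $T$ and one by $S$---is exactly what the argument needs, so this is the step I most rely on.

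Finally I would feed these into the joint separating hypothesis, applying it in both orderings. Since $\coz(g)\subseteq V$ is disjoint from $\coz(f)$, the pair $(g,f)$ satisfies $\coz(g)\cap\coz(f)=\emptyset$, hence $\coz(Tg)\cap\coz(Sf)=\emptyset$; as $y\in\coz(Tg)$ this forces $y\notin\coz(Sf)$, i.e.\ $Sf(y)=0$. Symmetrically, $\coz(f)\cap\coz(h)=\emptyset$ gives $\coz(Tf)\cap\coz(Sh)=\emptyset$, and since $y\in\coz(Sh)$ we obtain $Tf(y)=0$. Together these contradict the assumption that $Tf(y)\neq 0$ or $Sf(y)\neq 0$, establishing $x\in\supp(f)$.

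I do not anticipate a genuine obstacle here; the content is essentially bookkeeping. The one point requiring care is matching the operators to the functions correctly in the joint separating condition: one must pair $T$ with the witness $g$ and $S$ with $f$ in order to kill $Sf(y)$, and dually pair $S$ with the witness $h$ and $T$ with $f$ in order to kill $Tf(y)$. Lemma \ref{lem2} is tailored precisely to furnish the two independent witnesses needed for these two applications, so once that lemma is in hand the corollary follows immediately.
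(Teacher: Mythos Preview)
Your proof is correct and follows essentially the same approach as the paper's: both argue by contradiction, choose a neighborhood $V$ of $x$ disjoint from $\coz(f)$, invoke Lemma~\ref{lem2} to obtain the two witnesses (one detected by $T$, one by $S$), and then apply the joint separating hypothesis in both orderings to force $Tf(y)=0$ and $Sf(y)=0$. The only cosmetic differences are the naming of the witnesses and that the paper separates $x$ from $\supp(f)$ by neighborhoods with disjoint closures rather than simply taking $V$ disjoint from $\coz(f)$; neither affects the argument.
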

\begin{proof}
If $x \notin {\rm supp}(f)$, then there exist neighborhoods $V$ and $U$ of $x$ and ${\rm supp}(f)$, respectively, with disjoint closures. So, by Lemma \ref{lem2}, there exist functions $h, g \in A(X,E)$ with  $\coz(h) \cup \coz(g) \subseteq  V$, such that  $ Th(y)\neq 0$  and $ Sg(y)\neq 0$. Now if $ Tf(y)\neq 0$, then $ Tf(y)\cdot  Sg(y) \neq 0$, while $\coz (f) \cap \coz (g) =\emptyset$, a contradiction. Similarly, if $Sf(y)\neq 0$, then $Th(y)\cdot Sf(y) \neq 0$, while $\coz (f) \cap \coz (h) =\emptyset$. Thus in both cases we get a contradiction.
\end{proof}
\begin{corollary}\label{coro2}
The set of support points of each $y \in Y_1$ is a singleton.
\end{corollary}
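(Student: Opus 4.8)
The plan is to argue by contradiction, showing that two distinct support points would violate the jointly separating hypothesis for the pair $(T,S)$. By Lemma \ref{lem1} the set of support points of a given $y \in Y_1$ is non-empty, so it only remains to rule out the possibility of two distinct ones.

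First I would suppose that $x_1$ and $x_2$ are two distinct support points of some $y \in Y_1$. Since $X$ is compact Hausdorff, hence Hausdorff, I can separate $x_1$ and $x_2$ by disjoint open neighborhoods $V_1$ and $V_2$, respectively. Next I would invoke Lemma \ref{lem2} at each of the two points, but using only one half of its conclusion in each case: applying it to the support point $x_1$ with the neighborhood $V_1$ yields a function $f \in A(X,E)$ with $\coz(f) \subseteq V_1$ and $Tf(y) \neq 0$, while applying it to $x_2$ with $V_2$ yields $g \in A(X,E)$ with $\coz(g) \subseteq V_2$ and $Sg(y) \neq 0$.

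Finally, since $V_1 \cap V_2 = \emptyset$ we have $\coz(f) \cap \coz(g) \subseteq V_1 \cap V_2 = \emptyset$, so the jointly separating property forces $(Tf) \cdot (Sg) \equiv 0$ on $Y$; in particular $Tf(y)\, Sg(y) = 0$. This contradicts $Tf(y) \neq 0$ together with $Sg(y) \neq 0$, and therefore no two distinct support points of $y$ can exist.

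The argument is essentially immediate once Lemma \ref{lem2} is available, so I do not expect any serious obstacle at this stage; the substantive work was carried out in establishing that lemma. The one point that requires a little care is to exploit the two separate conclusions of Lemma \ref{lem2} at the two different points, selecting the $T$-witness at $x_1$ and the $S$-witness at $x_2$, so that the genuinely \emph{jointly} separating hypothesis (rather than a separate separating property of $T$ or of $S$ alone) can be applied to the single pair $(f,g)$.
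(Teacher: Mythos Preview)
Your proof is correct. The paper's argument is slightly different in structure: after separating $x_1$ and $x_2$ by neighborhoods with disjoint closures, it uses only the \emph{definition} of support point at $x_1$ to obtain a single $f\in A(X,E)$ with $\coz(f)\subseteq V_1$ and $Tf(y)\neq 0$ or $Sf(y)\neq 0$, and then invokes Corollary~\ref{coro1} to force $x_2\in\supp(f)\subseteq\overline{V_1}$, a contradiction. So the paper routes the jointly separating hypothesis through the already-established Corollary~\ref{coro1}, whereas you apply Lemma~\ref{lem2} twice (once at each point, extracting the $T$-witness at $x_1$ and the $S$-witness at $x_2$) and then invoke the jointly separating condition directly on the resulting pair $(f,g)$. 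Your route is a little more symmetric and makes the role of the joint separation more explicit; the paper's route is marginally shorter because it reuses the packaging provided by Corollary~\ref{coro1}. Both arguments rest on Lemma~\ref{lem2}, and neither requires anything beyond it.
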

\begin{proof}
Let $x_1$ and $x_2$ be  distinct support points of $y \in Y_1$. Then there exist neighborhoods $V_{1}$ and $V_{2}$ of  $x_1$ and $x_2$, respectively with disjoint closures. By definition, there exists a function $f \in A(X,E)$ such that $\coz(f) \subseteq V_{1}$ and $$ Tf(y) \neq 0 \;\; {\rm  or } \; \;  Sf(y) \neq 0.$$ But, by Corollary \ref{coro1}, we have $x_2 \in {\rm supp}(f)
\subseteq \overline{V}_{1}$ which is a contradiction.
\end{proof}
Corollary \ref{coro2} allows us to  define a function $\phi: Y_1\longrightarrow X$ with $\phi(y)=x$ where $x$ is the unique support point of $y\in Y_1$. We call  $\phi$ the support map of $T$ and $S$.

The following corollary is a straightforward consequence  of Corollary \ref{coro1}.
\begin{corollary} \label{coro3}
Let $f \in A(X,E)$ and $U \subseteq X$ be open. Then $f|_U=0$ implies that  $ Tf|_{\phi^{-1}(U)}=0$ and $Sf|_{\phi^{-1}(U)}=0$.
\end{corollary}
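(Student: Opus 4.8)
The plan is to fix an arbitrary point $y \in \phi^{-1}(U)$ and show that $Tf(y)=0$ and $Sf(y)=0$; since such $y$ ranges over all of $\phi^{-1}(U)$, this will give $Tf|_{\phi^{-1}(U)}=0$ and $Sf|_{\phi^{-1}(U)}=0$, as required.

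First I would reformulate the hypothesis $f|_U=0$ as a statement about the support of $f$. Since $f$ vanishes on the open set $U$, its cozero set satisfies $\coz(f)\subseteq X\setminus U$; and because $U$ is open, $X\setminus U$ is closed, so ${\rm supp}(f)=\overline{\coz(f)}\subseteq X\setminus U$. In particular, no point of $U$ lies in ${\rm supp}(f)$. This elementary topological bookkeeping is the only place where a little care is needed, and it is what lets Corollary \ref{coro1} apply cleanly.

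Next, for $y\in\phi^{-1}(U)$ the unique support point $x=\phi(y)$ lies in $U$ by definition of $\phi^{-1}(U)$, hence $x\notin{\rm supp}(f)$ by the previous step. Now I would invoke Corollary \ref{coro1} in its contrapositive form: $x$ is a support point of $y$, and Corollary \ref{coro1} asserts that $Tf(y)\neq 0$ or $Sf(y)\neq 0$ would force $x\in{\rm supp}(f)$. Since $x\notin{\rm supp}(f)$, neither inequality can hold, so $Tf(y)=Sf(y)=0$.

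There is no genuine obstacle here: once the hypothesis is rephrased as ${\rm supp}(f)\cap U=\emptyset$, the claim follows in two lines from the contrapositive of Corollary \ref{coro1}, which is precisely why the excerpt calls it a straightforward consequence.
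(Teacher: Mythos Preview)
Your proof is correct and follows exactly the approach the paper intends: rephrasing $f|_U=0$ as ${\rm supp}(f)\cap U=\emptyset$ and then applying the contrapositive of Corollary~\ref{coro1} at each $y\in\phi^{-1}(U)$. This is precisely the ``straightforward consequence'' the paper alludes to.
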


\begin{lemma} \label{lem3}
The support map $\phi$ is continuous.
\end{lemma}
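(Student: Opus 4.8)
The plan is to prove continuity at an arbitrary point $y_0\in Y_1$ directly from the definition of a support point together with Corollary \ref{coro1}, so no net or subnet argument is needed. Write $x_0=\phi(y_0)$ for the unique support point of $y_0$, and let $V$ be an arbitrary open neighborhood of $x_0$ in $X$. Since $X$ is compact Hausdorff, hence regular, I would first choose an open set $V_0$ with $x_0\in V_0\subseteq \overline{V_0}\subseteq V$. This shrinking step is exactly what lets me pass from cozero sets to supports without losing room, since a function with $\coz(f)\subseteq V_0$ only satisfies ${\rm supp}(f)\subseteq\overline{V_0}$.

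Next I would apply the definition of support point to $x_0=\phi(y_0)$ and the neighborhood $V_0$: there exists $f\in A(X,E)$ with $\coz(f)\subseteq V_0$ and $Tf(y_0)\neq 0$ or $Sf(y_0)\neq 0$. By the symmetry between $T$ and $S$ I may assume $Tf(y_0)\neq 0$. Because $Tf\in C(Y)$ is continuous, the set $W:=\coz(Tf)$ is open in $Y$ and contains $y_0$, so $W\cap Y_1$ is a neighborhood of $y_0$ in $Y_1$; this is the key observation, namely that the correct neighborhood of $y_0$ to use is the cozero set of the scalar function $Tf$, and not a neighborhood transported from $X$. It then remains to check $\phi(W\cap Y_1)\subseteq V$. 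For any $y\in W\cap Y_1$ we have $Tf(y)\neq 0$, and since $y\in Y_1$ its support point $\phi(y)$ is defined, so Corollary \ref{coro1} forces $\phi(y)\in {\rm supp}(f)\subseteq \overline{\coz(f)}\subseteq \overline{V_0}\subseteq V$. Thus $\phi(W\cap Y_1)\subseteq V$, which is precisely continuity of $\phi$ at $y_0$; as $y_0$ was arbitrary, $\phi$ is continuous.

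I do not expect a serious obstacle here, as the entire argument is a repackaging of Corollary \ref{coro1}, whose content is that $Tf(y)\neq 0$ (or $Sf(y)\neq 0$) pins $\phi(y)$ inside ${\rm supp}(f)$. The only points requiring genuine care are the closure-shrinking in the first step and the verification that each $y$ in the chosen neighborhood still lies in $Y_1$, so that $\phi(y)$ is defined and Corollary \ref{coro1} is applicable; both are routine consequences of the preliminaries.
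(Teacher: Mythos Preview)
Your proof is correct. Both your argument and the paper's rest on the same three ingredients: the definition of support point to produce $f$ with $\coz(f)$ contained in a small neighborhood of $\phi(y_0)$ and $Tf(y_0)\neq 0$, the continuity of $Tf$ to pass to nearby $y$, and Corollary~\ref{coro1} to force $\phi(y)\in\supp(f)$. The paper, however, packages this as a contradiction argument with nets: it takes a net $y_\alpha\to y_0$ in $Y_1$, uses compactness of $X$ to extract a subnet with $\phi(y_\alpha)\to x_0$, assumes $x_0\neq\phi(y_0)$, separates the two points by disjoint closed neighborhoods, and then applies the same ingredients to reach a contradiction. Your direct neighborhood argument is more economical---it avoids nets, subnets, and the compactness extraction entirely---and makes the role of Corollary~\ref{coro1} completely transparent; the paper's version trades that for a presentation style that some readers find more familiar.
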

\begin{proof}
Let $\{y_\alpha\}$ be a net in $Y_1$ converging to $y_0 \in Y_1$. Since $X$ is compact, the net $\{\phi(y_\alpha)\}$ has a convergent subnet, which is denoted again  by $\{\phi(y_\alpha)\}$, to a point $x_0\in X$. Suppose, on the contrary, that $\phi(y_0) \neq x_0$. Then there exist neighborhoods $V_{0}$ and  $V_{1}$ of $x_0$ and $\phi(y_0)$, respectively, with disjoint closures. By definition, there exists $f \in A(X,E)$ such that $\coz(f) \subseteq V_{1}$ and $$ Tf(y_0) \neq 0 \ \ or \ \  Sf(y_0) \neq 0.$$ Without  loss of generality we may assume that $ Tf(y_0) \neq 0 $. Since $Tf$ is continuous, there exists a neighborhood $W_{0}$ of $y_0$ such that  $ Tf(y) \neq 0 $ for each $y \in W_{0}$. On the other hand, since $y_\alpha \to y_0$, there exists $\beta$ such that $y_\alpha \in W_{0}$ for all  $\alpha
> \beta$, which implies that $ Tf(y_\alpha) \neq 0 $. Now
by Corollary \ref{coro1} we have   $\phi(y_\alpha) \in {\rm supp}(f) \subseteq \overline{V}_{1} $ for each $\alpha
> \beta$, which contradicts to the fact that $\phi(y_\alpha)
\to x_0$ and $\overline{V}_{0}\cap \overline{V}_{1} = \emptyset$.
This implies that $\phi$ is continuous.
\end{proof}
\begin{definition}\label{def3}
Let $\|\cdot\|$ be a norm on  $A(X,E)$ with $\|\cdot \|\ge \|\cdot
\|_\infty$ . We define the property (P) on $A(X,E)$ as follows

\begin{quote}
for each $f\in A(X,E)$ and $x\in X$ with $f(x)=0$ there exists a sequence $\{f_n\}$  in $A(X,E)$, such that each  $f_n$  vanishes  on a neighborhood  of $x$, and, furthermore,  $\|f_n-f\|\to 0$ as $n \to \infty$. \hfill (P)
\end{quote}
\end{definition}

If $(A(X,E),\|\cdot\|)$ satisfies  Ditkin's condition with respect to $A(X)$, in the sense that  for every $f \in A(X,E)$ and $x \in X$ with $f(x)=0$, there exists a sequence $\{f_n\}$  of functions in $A(X)$ such that each  $f_n$ vanishes on a neighborhood  of $x$ and, furthermore, $\lim_n\|f_n\cdot f-f\|=0$, then it has (P) property. Clearly any Ditkin Banach function algebra on $X$ satisfies the Ditkin's condition with respect to itself. For some examples of subspaces $A(X,E)$  satisfying Ditkin's condition or having (P) property, see Example \ref{exam}.

For each $x\in X$, let $\delta_x: A(X,E) \longrightarrow E$ be defined by $\delta_x(f)=f(x)$ for each $f \in A(X,E)$ .

The next lemma is an immediate consequence of Corollary \ref{coro3}.
\begin{lemma}\label{lem4}
Let $(A(X,E),\|\cdot\|_\infty)$ have (P) property.  Let $y \in Y_1$ such that the linear functional $\delta_y \circ T$ (resp. $\delta_y \circ S$) is continuous on $A(X,E)$. Then for each $f\in A(X,E)$, with $f(\phi(y))=0$ we have $Tf(y)=0$ (resp. $Sf(y)=0$).
\end{lemma}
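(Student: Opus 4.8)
The plan is to combine property (P) with the sharp localization provided by Corollary \ref{coro3}, and then use the assumed continuity of the evaluation functional $f \mapsto Tf(y)$ to pass to a limit. Throughout, write $x = \phi(y)$ and assume $f(\phi(y)) = f(x) = 0$; I treat the case of $T$, the case of $S$ being identical since Corollary \ref{coro3} asserts both conclusions simultaneously.

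First I would invoke property (P) for $(A(X,E), \|\cdot\|_\infty)$: since $f(x)=0$, there is a sequence $\{f_n\}$ in $A(X,E)$ with $\|f_n-f\|_\infty \to 0$ such that each $f_n$ vanishes on a neighborhood of $x$. Replacing that neighborhood by its interior if necessary, I may take an open neighborhood $U_n$ of $x$ on which $f_n \equiv 0$, i.e. $f_n|_{U_n}=0$.

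Next, for each $n$ I apply Corollary \ref{coro3} with the open set $U = U_n$: from $f_n|_{U_n}=0$ it yields $Tf_n|_{\phi^{-1}(U_n)}=0$. Since $\phi(y)=x \in U_n$, the point $y$ lies in $\phi^{-1}(U_n)$, and therefore $Tf_n(y)=0$ for every $n$. Finally, the hypothesis that $\delta_y \circ T$ is continuous on $(A(X,E),\|\cdot\|_\infty)$ lets me pass to the limit: $\|f_n-f\|_\infty \to 0$ gives $Tf_n(y)\to Tf(y)$, and since each $Tf_n(y)=0$ this forces $Tf(y)=0$, as desired. The statement for $S$ follows by replacing $T$ by $S$ verbatim.

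I do not expect a genuine obstacle here; the argument is a short synthesis of two previously established facts. The only point requiring a moment's care is the verification that $y \in \phi^{-1}(U_n)$, which is immediate from $\phi(y)=x$ together with $U_n$ being a neighborhood of $x$. In effect the lemma records the observation that the approximants manufactured by property (P) are precisely of the form to which the localization of Corollary \ref{coro3} applies, after which continuity of $\delta_y\circ T$ closes the argument.
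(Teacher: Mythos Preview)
Your proof is correct and is precisely the argument the paper has in mind: the paper states only that the lemma is ``an immediate consequence of Corollary~\ref{coro3},'' and you have unpacked that consequence exactly as intended, using property~(P) to approximate $f$ by functions vanishing near $\phi(y)$, applying Corollary~\ref{coro3} to kill $Tf_n(y)$, and then invoking the $\|\cdot\|_\infty$-continuity of $\delta_y\circ T$ to pass to the limit.
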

Motivated by the above lemma  we define two subsets $$Y_{cT}=\{y\in Y_1:  \delta_y \circ T \; {\rm  is \; \|.\|_\infty-continuous \; on \;}  A(X,E)  \} $$ and $$ Y_{cS}=\{y\in Y_1:   \delta_y \circ S \; {\rm is \;
\|.\|_\infty-continuous \; on \; } A(X,E) \}. $$ of $Y_1$.
\begin{lemma}\label{lem5}
If $(A(X, E),\|\cdot\|_\infty)$ has (P) property and $A(X,E)$ contains constants, then there exist maps $\Lambda^T: Y_{cT}
\longrightarrow E^*$, $y \mapsto \Lambda^T_y$ and $\Lambda^S :
Y_{cS} \longrightarrow E^*$, $y \mapsto \Lambda^S_y$ such that
\begin{equation}\label{eq1}
Tf(y)=\Lambda^{T}_y(f(\phi(y)) \; \;\;\;\; (f \in A(X,E), y \in Y_{cT}),
\end{equation}
 and
\begin{equation}\label{eq2}
Sf(y)=\Lambda^{S}_y(f(\phi(y)) \;\;\;\;\; (f\in A(X,E), y \in Y_{cS}).
\end{equation}
\end{lemma}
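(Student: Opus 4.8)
The plan is to construct, for each fixed $y \in Y_{cT}$, the desired functional $\Lambda^T_y$ directly from the action of $T$ on constant functions, and then to use Lemma \ref{lem4} to show that this single functional reproduces $Tf(y)$ for every $f$. Since $A(X,E)$ contains constants, for each $e \in E$ the constant function $\widetilde{e}$ lies in $A(X,E)$, so I would define
\[
\Lambda^T_y(e) := T\widetilde{e}(y) \qquad (e \in E).
\]
Linearity of $\Lambda^T_y$ is immediate from the linearity of $T$, of evaluation at $y$, and of the assignment $e \mapsto \widetilde{e}$.

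The continuity of $\Lambda^T_y$ is where the defining property of $Y_{cT}$ enters: because $y \in Y_{cT}$, the functional $\delta_y \circ T$ is $\|\cdot\|_\infty$-continuous on $A(X,E)$, and since $\|\widetilde{e}\|_\infty = \|e\|$ one obtains
\[
|\Lambda^T_y(e)| = |(\delta_y \circ T)(\widetilde{e})| \le \|\delta_y \circ T\|\,\|\widetilde{e}\|_\infty = \|\delta_y \circ T\|\,\|e\|,
\]
so that $\Lambda^T_y \in E^*$ with $\|\Lambda^T_y\| \le \|\delta_y \circ T\|$.

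It remains to verify the representation \eqref{eq1}. I would fix $f \in A(X,E)$ and set $e := f(\phi(y))$. Then $f - \widetilde{e}$ belongs to $A(X,E)$ and vanishes at $\phi(y)$. Since $(A(X,E),\|\cdot\|_\infty)$ has property (P) and $y \in Y_{cT}$, Lemma \ref{lem4} applies to $f - \widetilde{e}$ and yields $T(f-\widetilde{e})(y)=0$, that is, $Tf(y) = T\widetilde{e}(y) = \Lambda^T_y(f(\phi(y)))$, which is exactly \eqref{eq1}. The functional $\Lambda^S_y$ for $y \in Y_{cS}$ and the identity \eqref{eq2} are obtained in precisely the same way, replacing $T$ by $S$ throughout and invoking the corresponding half of Lemma \ref{lem4}.

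I do not expect a serious obstacle here: the argument is essentially a reformulation of Lemma \ref{lem4}, which already says that $Tf(y)$ depends on $f$ only through the value $f(\phi(y))$. The only points requiring care are that constants must be available in order to define $\Lambda^T_y$ (guaranteed by the hypothesis that $A(X,E)$ contains constants) and that the continuity of $\Lambda^T_y$ must be \emph{derived} from membership in $Y_{cT}$ rather than assumed; both are handled in the steps above.
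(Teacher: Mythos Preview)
Your proposal is correct and follows essentially the same approach as the paper: define $\Lambda^T_y(e)=T\widetilde{e}(y)$, observe that membership in $Y_{cT}$ gives continuity, and then apply Lemma~\ref{lem4} to $f-\widetilde{f(\phi(y))}$ to obtain the representation. Your write-up is in fact slightly more detailed than the paper's in making the continuity estimate explicit, but the argument is the same.
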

\begin{proof}
For each $y \in Y$, we define the linear functional $\Lambda^{T}_y : E \longrightarrow \BC$ by $\Lambda^{T}_y(e)=T(\tilde{e})(y)$, for each $e \in E$.  Clearly for $y \in Y_{cT}$ we have $\Lambda^{T}_y \in E^*$. Now  for each $f \in A(X,E)$ and $y \in Y_{cT}$ set $g=f-\widetilde {f(\phi(y))}$. Since $g(\phi_(y))=0$, it follows from Lemma \ref{lem4} that  $Tg(y)=0$ which implies that $Tf(y)=\Lambda^{T}_y(f(\phi(y))$.

For the second part,   it suffices that for each $y \in Y_{cS}$ we define the continuous linear functional $\Lambda^{S}_y : E
\longrightarrow \BC$ by $\Lambda^{S}_y(e)=S(\tilde{e})(y)$.
\end{proof}
\begin{lemma}\label{lem6}
Let $Y_0=Y \setminus Y_1$. Then under the hypotheses of the above lemma $Y_{cT} \cup Y_0$ and  $Y_{cS} \cup Y_0$ are closed subsets of $Y$.
\end{lemma}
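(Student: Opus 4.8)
The plan is to prove the equivalent statement that the complement $Y_1\setminus Y_{cT}$ is open in $Y$, by verifying directly that $Y_{cT}\cup Y_0$ is closed under limits of nets. So I would start with a net $\{y_\alpha\}$ in $Y_{cT}\cup Y_0$ with $y_\alpha\to y_0\in Y$ and aim to show $y_0\in Y_{cT}\cup Y_0$. If $y_0\in Y_0$ there is nothing to prove, so the substantive case is $y_0\in Y_1$, where I must establish that $\delta_{y_0}\circ T$ is $\|\cdot\|_\infty$-continuous (and symmetrically for $S$).

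The first observation is that $Y_1$ is open in $Y$: each $\coz(Tf)$ and $\coz(Sf)$ is open because $Tf,Sf\in C(Y)$, so both $\cup_{f}\coz(Tf)$ and $\cup_{f}\coz(Sf)$ are open and hence so is their intersection $Y_1$. Consequently, since $y_0\in Y_1$ and $y_\alpha\to y_0$, the net is eventually in $Y_1$; as $Y_0\cap Y_1=\emptyset$ and $Y_{cT}\subseteq Y_1$, it is eventually in $Y_{cT}$, and I may discard the initial indices and assume $y_\alpha\in Y_{cT}$ for all $\alpha$. This reduction is what lets me invoke the representation from Lemma \ref{lem5}: $Tf(y_\alpha)=\Lambda^{T}_{y_\alpha}(f(\phi(y_\alpha)))$ with $\Lambda^{T}_{y_\alpha}\in E^*$.

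The key step is to bound the functionals $\Lambda^{T}_{y_\alpha}$ uniformly. Here I would use that $A(X,E)$ contains constants: for each fixed $e\in E$ we have $\Lambda^{T}_{y_\alpha}(e)=T(\tilde e)(y_\alpha)$, and since $T\tilde e\in C(Y)$ is bounded on the compact space $Y$, $\sup_\alpha|\Lambda^{T}_{y_\alpha}(e)|\le\|T\tilde e\|_\infty<\infty$. Thus the family $\{\Lambda^{T}_{y_\alpha}\}$ of bounded functionals on the Banach space $E$ is pointwise bounded, and the uniform boundedness principle yields a constant $M$ with $\|\Lambda^{T}_{y_\alpha}\|\le M$ for all $\alpha$. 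Then for any $f\in A(X,E)$, $|Tf(y_\alpha)|\le\|\Lambda^{T}_{y_\alpha}\|\,\|f(\phi(y_\alpha))\|\le M\|f\|_\infty$, and letting $\alpha$ run, continuity of $Tf$ gives $|Tf(y_0)|=\lim_\alpha|Tf(y_\alpha)|\le M\|f\|_\infty$. Hence $\delta_{y_0}\circ T$ is continuous, so $y_0\in Y_{cT}$. The same argument with $S$ and $\Lambda^{S}$ handles $Y_{cS}\cup Y_0$.

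I expect the main obstacle to be this uniform boundedness step — specifically, recognizing that the net-indexed family must be controlled through the uniform boundedness principle rather than by a naive limiting argument, since a convergent net need not be bounded, and that the required pointwise bound is supplied precisely by evaluating $T$ on the constant functions $\tilde e$ and using $\|T\tilde e\|_\infty$. The openness of $Y_1$, which forces the net to lie eventually in $Y_{cT}$, is the other ingredient that makes the reduction clean; without it the net could remain in $Y_0$ and supply no functionals to bound.
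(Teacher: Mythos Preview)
Your proof is correct and follows essentially the same approach as the paper: both use the Banach--Steinhaus theorem, applied to the functionals $\Lambda^T_y(e)=T\tilde e(y)$ via the pointwise bound $\|T\tilde e\|_\infty$, together with the openness of $Y_1$, to pass the estimate $|Tf(y)|\le M\|f\|_\infty$ from $Y_{cT}$ to its closure points in $Y_1$. The only cosmetic difference is that the paper applies uniform boundedness once to the whole family $\{\Lambda^T_y: y\in Y_{cT}\}$ (obtaining a universal $M$) before passing to limits, whereas you apply it along each approximating net.
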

\begin{proof}
Let $y \in Y_{cT}$ and  $\Lambda^{T}_y$ be as in Lemma \ref{lem5}. For each $e \in E$ we have $$|\Lambda^{T}_y(e)|=|T\tilde{e} (y)|\leq \|T\tilde{e}\|_\infty,$$ and consequently  $\sup\{|\Lambda^{T}_y(e)|: y \in Y_{cT}\} \leq
\|T\tilde{e}\|_\infty < \infty$. Hence, by the Banach-Steinhaus
Theorem, there exists $M>0$ such that $$|\Lambda^{T}_y(e)|\leq M \|e\|;  \;\; \;\;\;\;\;\;\;\; ( y \in Y_{cT}  ,  e \in E).$$ This implies that  for each $y \in Y_{cT}$ and $f \in A(X, E)$
\begin{equation}\label{eq3}
|Tf (y)| = |\Lambda^{T}_y(f(\phi(y))|\leq M \|f(\phi(y))\| \leq M
\|f\|_\infty.
\end{equation}
Now let $y\in \overline{Y_{cT} \cup Y_0}$. If $y \in Y_0$ then clearly $y\in Y_{cT} \cup Y_0$. Otherwise, $y\in Y_1$ and since $Y_1$ is open in $Y$,  for each neighborhood $U$ of $y$,  $U\cap Y_1$ has a nonempty intersection with $Y_{cT} \cup Y_0$ and consequently $U\cap Y_{cT}\neq \emptyset$. For any neighborhood $U$ of $y$ choose $y_U\in U\cap Y_{cT}$. By (\ref{eq3}), we have $|Tf (y_U)| \leq M \|f\|_\infty$ for each $f \in A(X, E)$. Consequently $|Tf (y)| \leq M \|f\|_\infty,$ since $U$ is arbitrary. This clearly implies that $\delta_y \circ T$ is continuous on $A(X,E)$, that is, $ y \in Y_{cT}$, as desired. Similarly we can show  that $Y_{cS} \cup Y_0$ is closed in $Y$.
\end{proof}
\begin{theorem}\label{main1}
Let $A(X,E)$ be a subspace of $C(X,E)$ containing constants, $A(X)$ be a regular Banach function algebra on $X$  such that $A(X) \cdot A(X,E) \subseteq A(X,E)$. Assume, furthermore, that $(A(X,E), \|\cdot\|_\infty)$ has (P) property. Then for any pair $T,S: A(X,E) \longrightarrow C(Y)$ of jointly separating linear operators there exist  a partition $\{Y_0, Y_{c}, Y_{d}\}$ of $Y$ where $Y_0$ is closed in $Y$ and $Y_{d}$ is open in $Y$, a continuous map $\phi: Y_c \cup Y_{d} \longrightarrow X$ and also maps $\Lambda^{T}, \Lambda^{S}:Y_{c} \longrightarrow E^*$ such that $$Tf(y)=\Lambda^{T}_y(f(\phi(y)) \;\;\;\;\;\; (f\in A(X,E), y\in Y_c) $$ and $$\ Sf(y)=\Lambda^{S}_y(f(\phi(y))\;\;\;\;\; (f\in A(X,E), y \in Y_c). $$ In particular, if $T$ and $S$ are continuous, then $Y_d=\emptyset$ and $Y_c= \big(\cup_{f \in A(X,E)} \coz (Tf)\big) \cap
\big(\cup_{f \in A(X,E)} \coz (Sf )\big)$.
\end{theorem}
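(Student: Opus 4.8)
The plan is to assemble the objects already constructed in Lemmas \ref{lem1}--\ref{lem6} and verify that they fit together into the asserted partition; the theorem is essentially a bookkeeping statement. First I would set $Y_0 = Y \setminus Y_1$, take $Y_c = Y_{cT}\cap Y_{cS}$, and let $Y_d = Y_1 \setminus Y_c$. Since $Y_c$ and $Y_d$ are disjoint with union $Y_1$, and $Y_0$ is the complement of $Y_1$, the three sets form a partition of $Y$. The map $\phi$ is simply the support map introduced after Corollary \ref{coro2}, which is defined on all of $Y_1 = Y_c \cup Y_d$ and is continuous there by Lemma \ref{lem3}. On $Y_c \subseteq Y_{cT}$ (resp. $Y_c \subseteq Y_{cS}$) the functionals $\Lambda^T_y$ (resp. $\Lambda^S_y$) of Lemma \ref{lem5} belong to $E^*$ and satisfy the two displayed representation formulas; restricting these assignments to $Y_c$ yields the required maps $\Lambda^T, \Lambda^S: Y_c \to E^*$, and both formulas hold simultaneously on $Y_c$.

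The only topological points to check are that $Y_0$ is closed and $Y_d$ is open. I would first note that $Y_1$ is open in $Y$, being the intersection of the two open sets $\cup_{f \in A(X,E)} \coz(Tf)$ and $\cup_{f \in A(X,E)} \coz(Sf)$ (each a union of cozero sets of continuous functions); hence $Y_0 = Y\setminus Y_1$ is closed. For $Y_d$, the key input is Lemma \ref{lem6}: since $Y_{cT}\cup Y_0$ and $Y_{cS}\cup Y_0$ are closed in $Y$, their complements $Y_1 \setminus Y_{cT}$ and $Y_1 \setminus Y_{cS}$ are open. Writing $Y_d = Y_1 \setminus (Y_{cT}\cap Y_{cS}) = (Y_1 \setminus Y_{cT}) \cup (Y_1 \setminus Y_{cS})$ then exhibits $Y_d$ as a union of two open sets, so $Y_d$ is open, as claimed.

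For the final assertion, suppose $T$ and $S$ are continuous from $(A(X,E), \|\cdot\|_\infty)$ into $C(Y)$. Then for every $y \in Y$ the evaluation $\delta_y: C(Y)\to \BC$ is $\|\cdot\|_\infty$-continuous, so $\delta_y\circ T$ and $\delta_y \circ S$ are $\|\cdot\|_\infty$-continuous on $A(X,E)$; thus every point of $Y_1$ lies in both $Y_{cT}$ and $Y_{cS}$, giving $Y_{cT} = Y_{cS} = Y_1$. Consequently $Y_c = Y_1$ and $Y_d = \emptyset$, and $Y_c = Y_1 = \big(\cup_{f \in A(X,E)} \coz(Tf)\big) \cap \big(\cup_{f \in A(X,E)} \coz(Sf)\big)$ by the definition of $Y_1$.

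Since every ingredient has been prepared in the preceding lemmas, I do not expect a serious obstacle here: the whole argument is a matter of choosing the correct sets $Y_c$ and $Y_d$ and tracking inclusions. The one nontrivial verification is the openness of $Y_d$, but this reduces immediately to the closedness statements of Lemma \ref{lem6}, which themselves rest on the Banach--Steinhaus argument already carried out there.
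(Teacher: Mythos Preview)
Your proposal is correct and follows exactly the paper's approach: define $Y_c=Y_{cT}\cap Y_{cS}$ and $Y_d=Y_1\setminus Y_c$, take $\phi$ to be the support map, take $\Lambda^T,\Lambda^S$ from Lemma~\ref{lem5}, and invoke Lemmas~\ref{lem3}, \ref{lem5}, and \ref{lem6}. In fact you spell out more than the paper does---your explicit decomposition $Y_d=(Y_1\setminus Y_{cT})\cup(Y_1\setminus Y_{cS})$ to deduce openness from Lemma~\ref{lem6}, and your verification of the ``continuous'' clause, are details the paper leaves implicit.
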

\begin{proof}
Let $Y_1$, $Y_0$, $Y_{cT}$ and $Y_{cS}$ be subsets of $Y$ defined above. We put  $Y_{c}= Y_{cT} \cap Y_{cS}$ and $Y_{d}= Y_1
\setminus Y_{c}$. Let also $\phi: Y_1 \longrightarrow X$, and maps
$\Lambda^T, \Lambda^S: Y_c \longrightarrow E^*$ be defined as before.

 Clearly $Y_0$ is closed and,  by Lemma \ref{lem6}, $Y_{d}$
is open in $Y$.

By Lemmas \ref{lem3} and \ref{lem5}  we have $Tf(y)=\Lambda^{T}_y(f(\phi(y))$ and $Sf(y)=\Lambda^{S}_y(f(\phi(y))$ for all $f\in A(X,E)$ and $y \in Y_c$, as desired.
\end{proof}

\begin{theorem}\label{main1.1}
Let $E$ be finite dimensional, $A(X,E)$ be a subspace of $C(X,E)$ containing constants, $(A(X),\|\cdot\|_{A(X)})$ be a regular Banach function algebra on $X$ such that $A(X) \cdot A(X,E)
\subseteq A(X,E)$. Assume, furthermore, that there exists a
complete norm  $\|\cdot \|$ on $A(X,E)$ with $\|\cdot\|\ge \|\cdot
\|_\infty$ such that $(A(X,E),\|\cdot\|)$ has (P) property and
$\|f \cdot g\|\le \|f\|_{A(X)} \|g\|$ for all  $f\in A(X)$ and $g\in A(X,E)$.  Then
\begin{itemize}
\item[(i)] $\phi( Y_d)$ is finite and consists of  limit points of
$X$;

\item[(ii)] The maps $\Lambda^{T}$, $\Lambda^{S}: Y_c \lo E^*$ are
continuous.
\end{itemize}
\end{theorem}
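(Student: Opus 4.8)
The plan is to reduce both items to statements about $\|\cdot\|$-continuity of the evaluations $\delta_y\circ T$ and $\delta_y\circ S$, and then to run a localization/boundedness argument. First I would record the basic reduction: for $y\in Y_1$ the functional $\delta_y\circ T$ is $\|\cdot\|_\infty$-continuous if and only if it is $\|\cdot\|$-continuous. One direction is trivial since $\|\cdot\|\ge\|\cdot\|_\infty$. For the other, if $\delta_y\circ T$ is $\|\cdot\|$-continuous then, exactly as in Lemma \ref{lem4} but using property (P) for the complete norm together with Corollary \ref{coro3}, one gets $Tf(y)=0$ whenever $f(\phi(y))=0$; hence, by the argument of Lemma \ref{lem5}, $Tf(y)=\Lambda^T_y(f(\phi(y)))$ with $\Lambda^T_y(e)=T(\tilde e)(y)$. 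Since $E$ is finite dimensional, $\Lambda^T_y$ is automatically bounded, so $|Tf(y)|\le\|\Lambda^T_y\|\,\|f(\phi(y))\|\le\|\Lambda^T_y\|\,\|f\|_\infty$, giving $\|\cdot\|_\infty$-continuity. Consequently $Y_{cT}$ (resp. $Y_{cS}$) is precisely the set of $y\in Y_1$ at which $\delta_y\circ T$ (resp. $\delta_y\circ S$) is $\|\cdot\|$-continuous, and $Y_d$ is the set of $y\in Y_1$ at which at least one of $\delta_y\circ T$, $\delta_y\circ S$ is $\|\cdot\|$-discontinuous.

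Part (ii) then follows immediately: fixing a basis $e_1,\dots,e_m$ of $E$ identifies $E^*$ (whose norm topology, on a finite dimensional space, agrees with the coordinatewise topology) with $\BC^m$ via $\lambda\mapsto(\lambda(e_1),\dots,\lambda(e_m))$, and the $i$-th coordinate of $\Lambda^T_y$ is $\Lambda^T_y(e_i)=T(\tilde{e_i})(y)$, which is continuous in $y$ because $T(\tilde{e_i})\in C(Y)$. Hence $\Lambda^T$ and, likewise, $\Lambda^S$ are continuous on $Y_c$.

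For the ``limit point'' half of (i) I would argue contrapositively. If $x=\phi(y)$ is isolated in $X$, then $\{x\}$ is clopen, so by regularity of $A(X)$ there is $u\in A(X)$ with $u=1$ at $x$ and $u=0$ on $X\setminus\{x\}$. For any $f\in A(X,E)$ one has $u\cdot f=u\cdot\widetilde{f(x)}$ (both sides agree at $x$ and vanish elsewhere), while $(1-u)f$ vanishes on the neighborhood $\{x\}$ of $x$, so $T((1-u)f)(y)=0$ by Corollary \ref{coro3}; thus $Tf(y)=T(u\cdot\widetilde{f(x)})(y)=L(f(x))$ with $L\colon E\to\BC$, $L(e)=T(u\tilde e)(y)$, linear and hence bounded by finite dimensionality. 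This gives $|Tf(y)|\le\|L\|\,\|f\|_\infty$, so $\delta_y\circ T$ and, symmetrically, $\delta_y\circ S$ are $\|\cdot\|_\infty$-continuous, i.e. $y\in Y_c$; equivalently every $y\in Y_d$ has non-isolated support point.

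The main work, and the expected obstacle, is the finiteness of $\phi(Y_d)$, which I would obtain by a gliding-hump argument; by the reduction above it suffices to show $\{\phi(y):\delta_y\circ T\text{ is }\|\cdot\|\text{-discontinuous}\}$ is finite (and the same for $S$). Assume not; then I can extract distinct points $x_k=\phi(y_k)$ with pairwise disjoint open neighborhoods $U_k$ (possible since an infinite subset of the compact Hausdorff space $X$ has an accumulation point and one can separate a convergent subsequence of distinct points), and by regularity functions $a_k\in A(X)$ with $a_k\equiv1$ near $x_k$ and $\supp(a_k)\subseteq U_k$, so that $a_ka_j=0$ for $k\ne j$. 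Since $\delta_{y_k}\circ T$ is $\|\cdot\|$-discontinuous, localizing by $a_k$ (which leaves the value at $y_k$ unchanged, because $(1-a_k)g$ vanishes near $x_k$) and rescaling via $\|a_k g\|\le\|a_k\|_{A(X)}\|g\|$ produces $g_k\in A(X,E)$ with $\supp(g_k)\subseteq U_k$, $\|g_k\|\le 2^{-k}$ and $|Tg_k(y_k)|\ge k$. The series $g=\sum_k g_k$ converges in the complete norm $\|\cdot\|$ to an element of $A(X,E)$, and for each $k$ the tail $g-g_k$ vanishes on $U_k$, whence $T(g-g_k)(y_k)=0$ by Corollary \ref{coro3} and $|Tg(y_k)|=|Tg_k(y_k)|\ge k$, contradicting the boundedness of $Tg\in C(Y)$. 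The delicate points I expect to spend the most care on are the simultaneous choice of pairwise disjoint neighborhoods with supports controlled inside them, and the bookkeeping ensuring the localized, rescaled bumps retain the blow-up of $|Tg_k(y_k)|$ while staying summable in $\|\cdot\|$.
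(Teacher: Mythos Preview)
Your proof is correct and follows the same overall plan as the paper's: finite-dimensionality of $E$ together with property~(P) shows that $y\in Y_d$ precisely when one of the evaluations $\delta_y\circ T$, $\delta_y\circ S$ is $\|\cdot\|$-unbounded, and a gliding-hump built from localized bumps with summable $\|\cdot\|$-norms yields the contradiction, while the isolated-point and continuity-of-$\Lambda$ arguments are identical to the paper's. The only difference is organizational: you invoke~(P) once up front to establish the equivalence ``$\|\cdot\|_\infty$-continuous $\Leftrightarrow$ $\|\cdot\|$-continuous'' and then build the bumps directly from $\|\cdot\|$-unboundedness via the module inequality $\|a_k g\|\le\|a_k\|_{A(X)}\|g\|$, whereas the paper uses~(P) inside the loop (approximating each $g_n$ with $g_n(\phi(y_n))=0$ by some $h_n$ vanishing near $\phi(y_n)$ and taking $\psi_n=s_n\,(g_n-h_n)$)---the two constructions are equivalent in substance.
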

\begin{proof}
 (i) The proof is a minor modification of \cite{f}.

We first note that, by hypothesis, $(A(X,E),\|\cdot\|_\infty)$ also has  (P) property. Hence we have  the equalities, describing $T$ and $S$ on $Y_c$, in Theorem \ref{main1}. On the other hand since $E$ is assumed to be finite dimensional, for each $y \in Y_1$ the linear functionals $\Lambda^{T}_y, \Lambda^{S}_y : E
\longrightarrow \Bbb C$ defined by
$\Lambda^{T}_y(e)=T(\widetilde{e})(y)$ and $\Lambda^{S}_y(e)=S(\widetilde{e})(y)$, $e\in E$,  are both continuous. Thus for each $y \in Y_d$ there exists $f\in A(X,E)$ such that either $$Tf(y) \neq \Lambda^{T}_y(f(\phi(y)) \;\;\; {\rm or}\; \;\; Sf(y)\neq\Lambda^{S}_y(f(\phi)).$$ Now suppose, on the contrary, that the set $\phi(Y_d)=\phi_(Y_1\setminus Y_{cT}) \cup \phi(Y_1\setminus Y_{cS})$ is infinite. Without loss of generality we may assume that $\phi( Y_1\setminus Y_{cT})$ is infinite. Hence there exists a sequence $\{y_n\}$ in $Y_1\setminus Y_{cT}$ such that $\{\phi( y_n)\}$ is a sequence of distinct points in $\phi( Y_1\setminus Y_{cT})$. Since $X$ is compact, by passing through a subsequence if necessary, we find a sequence $\{U_n\}$ of pairwise disjoint open subsets of $X$ such that, for each $n\in \BN$,  $\phi( y_n)
\in U_n$. Given $n\in \BN$ let $W_n$ be an open neighborhood of
$\phi( y_n)$ such that $\overline{W}_n \subseteq U_n$. Consider, by regularity assumption,  a function $s_n\in A(X)$ such that $s_n=1$ on $\overline{W}_n$ and $\coz(s_n)\subseteq U_n$. Since $y_n \notin Y_{cT}$, there exists $f_n \in A(X,E)$ with $$Tf_n(y_n) \neq \Lambda^{T}_{y_n}(f_n(\phi(y_n)).$$ Put $g_n=f_n-\widetilde{f_n(\phi(y_n)}$.  Then $g_n \in A(X,E)$, $g_n(\phi(y_n)=0$ and  $Tg_n(y_n)\neq 0$. Replacing $g_n$ by an appropriate multiple of $g_n$, we can assume that $|Tg_n(y_n)|
> n$. Since $(A(X,E), \|\cdot \|)$ has (P) property, for each $n\in \Bbb N$ there exist a
 function $h_n \in A(X,E)$ such that $h_n =0 $ on a neighborhood
 $V_n$ of $\phi(y_n)$  and $\|h_n -g_n\| <
\frac{1}{n^2(\|s_n\|_{A(X)}+1)}$. Set $\psi_n=s_n \cdot
(g_n-h_n)$. Then  $\psi_n=g_n$ on $V_n \cap W_n$ and  it follows from Corollary \ref{coro3} that $T\psi_n(y_n)=Tg_n(y_n)$. We note that $\|\psi_n\|\leq \|s_n\|_{A(X)}\|h_n -g_n\|<\frac{1}{n^2}$ for all $n\in \BN$. Since $(A(X,E),
\|\cdot\|)$ is a Banach space, the function $ \psi=
\Sigma_{n=1}^{\infty}\psi_n$ is an element of $A(X,E)$. Clearly,
for each $n\in \BN$ we have $\coz(\psi_n)\subseteq U_n$ and, since $\{U_i\}$ is  pairwise disjoint, we get $\psi=\psi_n$ on $U_n$. Consequently $$|T\psi(y_n)|=|T\psi_n(y_n)|=|Tg_n(y_n)|>n,$$ which is a contradiction since $T\psi$ is a continuous function on the compact space $Y$. Hence $\phi(Y_d)$ is finite.

Now suppose that $\phi(y)$ is an isolated point of $X$ for some $y
\in Y_1$. Then  $V=\{\phi(y)\}$ is an open neighborhood of
$\phi(y)$ and, for each $f \in A(X,E)$, if we set $g=\widetilde{f(\phi(y)}$ then $f=g$ on $V$, and, by Corollary
\ref{coro3}
$$Tf(y)=Tg(y)=T(\widetilde{f(\phi(y)})(y)=\Lambda^{T}_x(f(\phi(y))$$ and $$Sf(y)=Sg(y)=S(\widetilde{f(\phi(y)})(y)=\Lambda^{S}_y(f(\phi(y)).$$ Thus $y\in Y_{cT} \cap Y_{cS}$, since $\Lambda^{T}_y$ and $\Lambda^{S}_y$ are continuous.

(ii) Let $\{y_\alpha\}$ be a net in $Y_1$ converging to a point $y_0\in Y_1$. Fixing $e\in E$, by definition of $\Lambda^{T}_y$ and continuity of $T\tilde{e}$, the net $\{\Lambda^{T}_{y_\alpha}(e)\}$ converges to $\Lambda^{T}_{y_0}(e)$. Since $E$ is finite dimensional this easily implies that $\|\Lambda^{T}_{y_\alpha}- \Lambda^{T}_{y_0}\| \to 0$ in $E^*$, that is $\Lambda^{T}$ is continuous.  Continuity of $\Lambda^{S}$ can be proven  by a similar argument.
\end{proof}

\section{jointly separating maps, vector- valued case}
 In this section, we consider the general case and study  jointly separating linear maps
 between spaces of vector-valued functions.

Let  $X,Y$ be compact Hausdorff spaces, $E,F$ be  Banach spaces over $\Bbb K=\Bbb R$ or $\Bbb C$, and $A(X,E)$ be a subspace of $C(X,E)$. For each linear operator $T: A(X,E)\longrightarrow C(Y,F)$ and each $v^*\in F^*$, let  the linear operator $v^*\circ  T: A(X,E) \longrightarrow C(Y)$ be defined by $v^*\circ T(f)=v^*\circ Tf$. Then, as we noted before, a pair $T,S: A(X,E)\longrightarrow C(Y,F)$ of linear operators is jointly separating if and only if for each $v^*\in F^*$, the pair $v^*\circ T, v^*\circ S: A(X,E) \longrightarrow C(Y)$ is  jointly separating, i.e., $T, S$ are jointly separating with respect to each $v^*\in F^*$. Clearly this is also equivalent to say that for each $v^*, w^* \in F^*$, the pair $v^*\circ T, w^*\circ S: A(X,E)
\longrightarrow C(Y)$ is jointly separating.

\begin{definition}\label{def4}
For a pair of jointly separating linear maps $T,S: A(X,E) \longrightarrow C(Y,F)$ we consider the following subsets of $Y$
\[ Y_1=\big(\cup_{f \in A(X,E)} \coz (Tf )\big) \cap \big(\cup_{f \in A(X,E)}
\coz (Sf)\big),\]
\[Y_c=\{y\in Y_1: \delta_y\circ T, \delta_y \circ S: A(X,E) \lo
F_w \; {\rm are} \; \|.\|_\infty{\rm -continuous} \}.\] We also put  $Y_0=Y\backslash Y_1$ and $Y_d=Y_1 \backslash Y_c$.
\end{definition}
Then $\{Y_0,Y_c,Y_d\}$ is a partition of $Y$ and next  lemma is straightforward.
\begin{lemma}
For each $v^*\in F^*$ let $\{Y^{v^*}_0, Y^{v^*}_c, Y^{v^*}_d\}$ be the partition given in the previous section for jointly separating maps $v^*\circ T, v^*\circ S: A(X,E) \to C(Y)$. Then

{\rm (i)} $Y_1= \cup_{v^*\in F^*}Y_1^{v^*}$, where $Y_1^{v^*}=Y\backslash Y_0^{v^*}$ and $Y_0=\cap_{v^*\in F^*}Y_0^{v^*}$.

{\rm (ii)} $Y_1$ is an open subset of $Y$.

{\rm (iii)} If the ranges of $T$ and $S$ contain  constant functions, then  $Y_1=Y$ and   $Y_c= \cap_{v^*\in F^*} Y^{v^*}_c  $. If, in addition, $T$ and $S$ are $\|.\|_\infty$--continuous, then $Y_c= Y$.
\end{lemma}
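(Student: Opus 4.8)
The plan is to reduce every assertion to the scalar-valued sets $Y_0^{v^*},Y_1^{v^*},Y_c^{v^*}$ produced in the previous section, using only the duality between $F$ and $F^*$ supplied by Hahn--Banach. The starting observation is that for $h\in C(Y,F)$ and $y\in Y$ one has $h(y)\neq 0$ if and only if $v^*(h(y))\neq 0$ for some $v^*\in F^*$; applied to $h=Tf$ this gives the key identity
\[ \bigcup_{f\in A(X,E)}\coz(Tf)=\bigcup_{v^*\in F^*}\bigcup_{f\in A(X,E)}\coz(v^*\circ Tf), \]
and likewise for $S$. Writing $A^{v^*}=\bigcup_f\coz(v^*\circ Tf)$ and $B^{v^*}=\bigcup_f\coz(v^*\circ Sf)$, so that $Y_1^{v^*}=A^{v^*}\cap B^{v^*}$, part (i) becomes the claim that $\big(\bigcup_{v^*}A^{v^*}\big)\cap\big(\bigcup_{v^*}B^{v^*}\big)=\bigcup_{v^*}(A^{v^*}\cap B^{v^*})$. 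The inclusion $\supseteq$ is immediate.

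For $\subseteq$ I would take $y\in Y_1$, choose $f,g$ with $Tf(y)=e_1\neq 0$ and $Sg(y)=e_2\neq 0$, and invoke the fact (already used in the remark following the definition of jointly separating maps in the preliminaries) that a single $u^*\in F^*$ can be chosen with $u^*(e_1)\neq 0$ and $u^*(e_2)\neq 0$; then $y\in A^{u^*}\cap B^{u^*}=Y_1^{u^*}$. This simultaneous-separation step is the only real content of (i), and I expect it to be the main obstacle, since it is exactly what prevents the naive distribution of the two unions over the intersection. Once $Y_1=\bigcup_{v^*}Y_1^{v^*}$ is established, the formula $Y_0=\bigcap_{v^*}Y_0^{v^*}$ follows by De Morgan from $Y_0=Y\bs Y_1$ and $Y_0^{v^*}=Y\bs Y_1^{v^*}$.

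For (ii) I would note that each $\coz(v^*\circ Tf)$ is the cozero set of a function in $C(Y)$, hence open; therefore $A^{v^*}$ and $B^{v^*}$ are open, $Y_1^{v^*}=A^{v^*}\cap B^{v^*}$ is open, and $Y_1=\bigcup_{v^*}Y_1^{v^*}$ is a union of open sets, hence open. For the first claim of (iii), if $Tf_0$ is a nonzero constant function $\tilde c$ then $\coz(Tf_0)=Y$, so $\bigcup_f\coz(Tf)=Y$; the same for $S$ gives $Y_1=Y$. The identity $Y_c=\bigcap_{v^*}Y_c^{v^*}$ (the intersection being understood over $v^*\neq 0$, the zero functional yielding only the degenerate partition $\{Y,\emptyset,\emptyset\}$) rests on the standard description of the weak topology as the initial topology induced by $F^*$: the linear map $\delta_y\circ T\colon A(X,E)\lo F_w$ is $\|\cdot\|_\infty$-continuous precisely when $\delta_y\circ(v^*\circ T)=v^*\circ(\delta_y\circ T)$ is $\|\cdot\|_\infty$-continuous for every $v^*\in F^*$, and similarly for $S$. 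Under the constants hypothesis each $Y_1^{v^*}$ equals $Y$ for $v^*\neq 0$, so membership in $Y_1^{v^*}$ imposes no constraint, and comparing the two definitions gives the equality. Finally, if $T$ and $S$ are $\|\cdot\|_\infty$-continuous then $\|\delta_y\circ T(f)\|=\|Tf(y)\|\le\|Tf\|_\infty$ shows $\delta_y\circ T$ is norm- hence weakly continuous for every $y$, and together with $Y_1=Y$ this forces $Y_c=Y$.
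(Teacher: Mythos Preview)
Your argument is correct and supplies exactly the details the paper omits; the paper declares the lemma ``straightforward'' and gives no proof. Your observation that the formula $Y_c=\bigcap_{v^*\in F^*}Y_c^{v^*}$ in (iii) must be read over nonzero $v^*$ (since $Y_1^{0}=\emptyset$ forces $Y_c^{0}=\emptyset$) is a genuine clarification the paper does not make explicit.
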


\begin{theorem}\label{main2}
Let $A(X,E)$ be a subspace of $C(X,E)$ containing constants, $A(X)$ be a regular Banach function algebra on $X$ such that $A(X)\cdot A(X,E) \subseteq A(X,E)$ and let $(A(X,E),
\|\cdot\|_\infty)$ have (P) property. Let $T,S: A(X,E)
\longrightarrow C(Y,F)$ be jointly separating linear operators.
Then there exist a  continuous map $\phi: Y_c \cup Y_d \longrightarrow X$, and also maps  $\Lambda^T, \Lambda^S: Y_c \to L(E, F_w)$, such that for each $f\in A(X,E)$ and each $ y \in Y_c$
\[ Tf(y)=\Lambda^T_y(f(\phi(y))\] 
and
\[Sf(y)=\Lambda^S_y(f(\phi(y)).\] 
In particular, if $T$ and $S$ are continuous, then  $Y_d=\emptyset$, $Y_c=Y_1$
and $\Lambda^T$ and $\Lambda^S$ take their values in $L(E,F)$. If, in addition the ranges of $T$ and $S$ contain constants, then $Y_c=Y$.
\end{theorem}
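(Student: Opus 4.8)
The plan is to transport the entire construction of Section 3 to the vector-valued setting by reading $Tf(y), Sf(y)$ as elements of $F$ and ``$\neq 0$'' as non-vanishing of a vector, the point being that a vector is zero precisely when every functional kills it. First I would check that the proofs of Lemmas \ref{lem1}, \ref{lem3} and Corollaries \ref{coro1}, \ref{coro3} survive verbatim in this reading: the only ingredients they use are the $A(X)$-module structure and regularity of $A(X)$ (partitions of unity in Lemma \ref{lem1}), the joint separation hypothesis in its vector form $\coz(Tf)\cap\coz(Sg)=\emptyset$, which enters Lemma \ref{lem2} and Corollary \ref{coro1} and is exactly what Definition \ref{def4} supplies, and the continuity of the $F$-valued functions $Tf, Sf$ (Lemma \ref{lem3}). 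Running these arguments gives a unique support point for each $y\in Y_1$, hence a well-defined map $\phi\colon Y_1=Y_c\cup Y_d\to X$, continuous by Lemma \ref{lem3}, together with the vector form of Corollary \ref{coro3}: if $f|_U=0$ then $Tf|_{\phi^{-1}(U)}=0$ and $Sf|_{\phi^{-1}(U)}=0$.

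The heart of the matter is the vector analogue of Lemma \ref{lem4}, and this is where the weak topology in the definition of $Y_c$ does exactly the right work. Fix $y\in Y_c$ and $f\in A(X,E)$ with $f(\phi(y))=0$. By property (P), choose $f_n\to f$ in $\|\cdot\|_\infty$ with each $f_n$ vanishing on a neighbourhood $U_n$ of $\phi(y)$. Since $\phi(y)\in U_n$ forces $y\in\phi^{-1}(U_n)$, the vector form of Corollary \ref{coro3} gives $Tf_n(y)=0$ for every $n$. Because $y\in Y_c$, the map $\delta_y\circ T\colon A(X,E)\to F_w$ is $\|\cdot\|_\infty$-continuous, so $Tf_n(y)\to Tf(y)$ weakly; as every term vanishes, the weak limit is $0$ and hence $Tf(y)=0$, and likewise $Sf(y)=0$. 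I expect this to be the main obstacle, precisely because it is the step where one must resist demanding norm continuity: only weak continuity is assumed, yet it is just enough, since the weak limit of the zero sequence is zero.

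With this in hand I would define $\Lambda^T_y,\Lambda^S_y\colon E\to F$ by $\Lambda^T_y(e)=T(\tilde e)(y)$ and $\Lambda^S_y(e)=S(\tilde e)(y)$, which is legitimate because $A(X,E)$ contains the constants. For $y\in Y_c$ the composite $e\mapsto\tilde e\mapsto T(\tilde e)(y)$ is continuous from $E$ into $F_w$, the first arrow being an isometry for $\|\cdot\|_\infty$ and the second being $\delta_y\circ T$, so $\Lambda^T_y\in L(E,F_w)$ and similarly $\Lambda^S_y\in L(E,F_w)$. For arbitrary $f$, applying the vector Lemma \ref{lem4} to $g=f-\widetilde{f(\phi(y))}$, which vanishes at $\phi(y)$, yields $Tg(y)=0$, whence $Tf(y)=T(\widetilde{f(\phi(y))})(y)=\Lambda^T_y(f(\phi(y)))$, and the same computation gives $Sf(y)=\Lambda^S_y(f(\phi(y)))$. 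This establishes the two displayed identities on $Y_c$.

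Finally I would dispatch the ``in particular'' clauses. If $T$ and $S$ are $\|\cdot\|_\infty$-continuous, then for every $y$ the map $\delta_y\circ T$, being the composition of $T$ with the continuous evaluation $\delta_y\colon C(Y,F)\to F$, is norm-continuous into $F$ and a fortiori continuous into $F_w$; hence every point of $Y_1$ lies in $Y_c$, so $Y_c=Y_1$ and $Y_d=\emptyset$. Moreover the estimate $\|\Lambda^T_y(e)\|=\|T(\tilde e)(y)\|\le\|T\|\,\|e\|$ shows $\Lambda^T_y\in L(E,F)$, and likewise $\Lambda^S_y\in L(E,F)$. If in addition the ranges of $T$ and $S$ contain the constants, then part (iii) of the lemma following Definition \ref{def4} gives $Y_1=Y$, and combined with $Y_c=Y_1$ this yields $Y_c=Y$, completing the argument.
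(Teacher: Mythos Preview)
Your proof is correct and takes a genuinely different route from the paper's. The paper proceeds by reduction to the scalar-valued case: for each $v^*\in F^*$ it applies Theorem~\ref{main1} to the pair $v^*\circ T,\,v^*\circ S:A(X,E)\to C(Y)$, obtaining partitions $\{Y_0^{v^*},Y_c^{v^*},Y_d^{v^*}\}$ and support maps $\phi_{v^*}$; it then shows (Step~I) that the $\phi_{v^*}$'s agree on overlaps, re-proves continuity of the glued map $\phi$ (Step~II), and finally (Step~III) verifies the representation $Tf(y)=\Lambda^T_y(f(\phi(y)))$ by checking $w^*(Tf(y))=w^*(\Lambda^T_y(f(\phi(y))))$ for every $w^*\in F^*$, with a case split according to whether $y\in Y_1^{w^*}$ or $y\in Y_0^{w^*}$.

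Your approach runs the Section~3 machinery natively in the $F$-valued setting, so you get a single support map $\phi:Y_1\to X$ and the vector form of Corollary~\ref{coro3} without any gluing, and your vector analogue of Lemma~\ref{lem4} (weak convergence of $Tf_n(y)=0$ to $Tf(y)$ via the $\|\cdot\|_\infty$-to-$F_w$ continuity of $\delta_y\circ T$) replaces the paper's Step~III case analysis in one stroke. This is cleaner and more economical; the paper's route, by contrast, has the expository advantage of invoking Theorem~\ref{main1} as a black box and thereby making explicit how the vector-valued theorem sits over the scalar one. Both arguments hinge on the same idea---property~(P) plus continuity of $\delta_y\circ T$ at points of $Y_c$ kills $Tf(y)$ when $f(\phi(y))=0$---but you exploit it once at the vector level rather than coordinatewise.
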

\begin{proof}
Let $Y_0$, $Y_1$, $Y_c$ and $Y_d$ be as in Definition \ref{def4}.

For each $v^*\in F^*$, let  $\{Y_0^{v^*}, Y^{v^*}_{c}, Y^{v^*}_{d}\}$ be the partition given in Theorem \ref{main1} for jointly separating maps $v^*\circ T, v^*\circ S: A(X,E) \longrightarrow C(Y)$. Hence $Y^{v^*}_0$ is closed in $Y$ and $Y^{v^*}_{d}$ is open in $Y$, and there exist a continuous map $\phi_{v^*}: Y^{v^*}_c \cup Y^{v^*}_{d} \longrightarrow X$ and also maps $\Lambda^{v^*,T}, \Lambda^{v^*,S}:Y^{v^*}_{c} \longrightarrow E^*$,   such that $$v^*(Tf(y))=\Lambda^{v^*,T}_y(f(\phi_{v^*}(y)) \;\;\;\;\;\; (f\in A(X,E), y\in Y^{v^*}_c) $$ and $$v^*(Sf(y))=\Lambda^{v^*S}_y(f(\phi_{v^*}(y))\;\;\;\;\; (f\in A(X,E), y \in Y^{v^*}_c). $$

The proof will be given through the following steps:
\vspace*{.2cm}

{\bf Step I.} For each $v_1^*, v_2^*\in F^*$ we have $\phi_{v_1^*} =\phi_{v_2^*}$ on $Y_1^{v_1^*}\cap Y_1^{v_2^*}$.

 Assume, on the contrary, that there exists $y \in Y_1^{v_1^*}\cap Y_1^{v_2^*}$ such
that $\phi_{v_1^*} (y)\neq \phi_{v_2^*}(y)$. Then choosing disjoint neighborhoods $U_i$ of $\phi_{v_i^*}(y)$, $i=1,2$, we can find, by Lemma \ref{lem2},  a function $f_1 \in A(X,E)$ such that $\coz(f_1)\subseteq U_1$ and $v_1^*(Tf_1(y)) \neq 0$ , and  similarly a function $f_2\in A(X,E)$ such that $\coz(f_2) \subseteq  U_2$  and $v_2^*(Tf_2(y))\neq 0$. Hence $v_1^*(Tf_1(y)) \cdot v_2^*(Sf_2(y))\neq 0$, which is a contradiction since $\coz(f_1)\cap \coz(f_2) =\emptyset$ and $T$ and $S$ are jointly separating.

\vspace*{.1cm}

 By the above step,  the map $\phi: Y_1
\longrightarrow X$ defined by $\phi(y)= \phi_{v^*}(y)$, where $v^*\in F^*$ such that $y
\in Y_1^{v^*}$, is well-defined.

\vspace*{.2cm}

{\bf Step II.} The map $\phi: Y_1 \lo X$  is continuous.

Let $\{y_\alpha\}$ be a net in $Y_1$ converging to a point  $y_0
\in Y_1$ such that $\{\phi(y_\alpha)\}$ converges to a point $x_0
\in X$ with $x_0 \neq \phi(y_0)$. Consider neighborhoods $V_1$ and
$V_2$ of $x_0$ and $\phi(y_0)$, respectively, with disjoint closures. Since $y_0 \in Y_1$, there exists $v^*\in F^*$ such that $y_0\in Y_1^{v^*}$ and consequently $\phi(y_0)= \phi_{v^*}(y_0)$. By Lemma \ref{lem2} there exists a function $f\in A(X,E)$ such that $\coz(f) \subseteq V_2$ and $ v^{*}( Tf(y_0)) \neq 0 $. Since $\{y_\alpha\}$ converges to $y_0$ and $\{\phi(y_\alpha)\}$ converges to $x_0$, it follows from continuity of $ v^{*} \circ Tf$  that $ v^{*}( Tf(y_{\alpha_0})) \neq 0$ for some $\alpha_0$ with $\phi(y_{\alpha_0}) \in V_1$. Choose  $w^*\in F^*$ with $\phi(y_{\alpha_0})= \phi_{w^*}(y_{\alpha_0})$. Using Lemma
\ref{lem2} once again, we can find $g \in A(X,E)$ with $\coz(g)
\subseteq V_1$ and $w^{*}( Sg(y_{\alpha_0})) \neq 0 $. That is
$\coz(f)\cap \coz(g)=\emptyset$ and $ v^{*}( Tf(y_{\alpha_0})) w^{*}( Sg(y_{\alpha_0})) \neq 0 $, which is a contradiction. So $\phi$ is continuous.

\vspace*{.2cm}

For each $y \in Y_c$,  we define the linear operators $\Lambda^{T}_y, \Lambda^{S}_y : E \longrightarrow F$ by $\Lambda^{T}_y(e)=T\tilde{e} (y)$ and $\Lambda^{S}_y(e)=S\tilde{e} (y)$.
It follows from definition of $Y_c$  that $\Lambda^T_y, \Lambda^S_y \in L(E,F_w)$.  

{\bf Step III.} For each $f\in A(X,E)$ and $y\in Y_c$ 

\begin{equation}\label{eq4}
Tf(y)=\Lambda^T_y(f(\phi(y))
\end{equation}
and
\begin{equation}\label{eq5}
Sf(y)=\Lambda^S_y(f(\phi(y)).
\end{equation}
Let $y\in Y_c$. To prove (\ref{eq4}) it suffices to show that for each $w^*\in F^*$
\begin{align}\label{eq7}
w^*(Tf(y))=w^* ( \Lambda^{T}_y(f(\phi(y))),  \ \ \  (f\in A(X,E))
\end{align}
Since $y\in Y_c\subseteq Y_1$ it follows that there exists $v^*\in F^*$ such that $y\in  Y_1^{v^*}$. Thus $\phi(y)=\phi_{v^*}(y)$.  

Let $w^*\in F^*$ be given. If $y\in  Y_1^{w^*}$ then we have $y\in  Y_c^{w^*}$, since $y\in Y_c$. Clearly  $\Lambda^{w^*,T}_y=w^*\circ \Lambda^T_y$. Hence    
\[w^*(Tf(y))=\Lambda^{w^{*},T}_y(f(\phi_{w^*}(y))= w^* \circ
\Lambda^{T}_y(f(\phi_{w^*}(y))=w^* ( \Lambda^{T}_y(f(\phi(y))),\]
for all  $f\in A(X,E)$, as desired. Now if  $y\notin  Y_1^{w^*}$, then  $y\in  Y_0^{w^*}$ which implies that one of the linear functionals  $w^*\circ \delta_y\circ T$ or  $w^*\circ \delta_y\circ S$ on $A(X,E)$ is equal to zero. Clearly  (\ref{eq7}) holds if $w^*\circ \delta_y\circ T\equiv 0$. So assume that $w^*\circ \delta_y\circ T\neq 0$. 
To prove (\ref{eq7}) we show that for each  $g\in A(X,E)$  with $g(\phi(y))=0$ we have $w^*(Tg(y))=0$. Since $A(X,E)$ has (P) property and $w^*\circ \delta_y\circ T$ is continuous on $A(X,E)$  we may assume that $g$ vanishes on a neighborhood of $\phi(y)$.  As it was noted before, $\phi(y)=\phi_{v^*}(y)$, that is $\phi(y)$ is a support point for $y$ (as an element of $Y^{v^*}_1$ ) for jointly separating maps $v^*\circ T$ and $v^*\circ S$. Hence, by Lemma  \ref{lem2}, there exists $h \in A(X,E)$ such that $\coz(g) \cap \coz(h)=\varnothing$ and $v^*(Sh(y))\neq 0$. Hence $w^*(Tg(y))= 0$, since $T$ and $S$ are jointly separating. This implies that if  $g\in A(X,E)$  and  $g(\phi(y))=0$,  then $w^*(Tg(y))= 0$.  Now let $f\in A(X,E)$ be arbitrary and put $g=f-\widetilde{f(\phi(y)})$. Since $g(\phi(y))=0$ it follows that $w^*(Tg(y))=0$, that is  $w^*(Tf(y))=w^* (T(\widetilde{f(\phi(y)})(y))=w^* ( \Lambda^{T}_y(f(\phi(y)))$, as desired.

The above argument  shows that (\ref{eq7}) holds for each $w^*\in F^*$ and consequently 
\[Tf(y)=\Lambda^T_y(f(\phi(y)) \;\;\;\;\;\;\; (f \in A(X,E), y\in Y_c).\]
Applying the same argument  with $S$ instead of $T$ we conclude that
\[Sf(y)=\Lambda^S_y(f(\phi(y)) \;\;\;\;\;\;\; (f \in A(X,E), y\in Y_c).\]

 This completes the proof of the first part. The second part is easily verified.
\end{proof}

\begin{corollary}\label{rem}

{\rm (i)} In the above theorem,  if $Y_0^{v^*}=\emptyset$ for each $v^* \in F^*$ (in particular, if
 the ranges of $T$ and $S$ contain constants), then  $Y_c$ is closed and $Y_d$ is open in $Y$.

{\rm (ii)} If $T$ is injective and $\cup_{f \in A(X,E)} \coz (Sf)=Y$ (in particular, if the range of $S$ contains at least one nonzero constant function), then $\phi$ has a dense range.
\end{corollary}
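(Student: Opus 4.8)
The plan is to reduce part~(i) to the scalar picture of Section~3 via the functionals $v^*$. The hypothesis $Y_0^{v^*}=\emptyset$ for every $v^*\in F^*$ forces $Y_1^{v^*}=Y$ for all $v^*$, hence $Y_1=\cup_{v^*}Y_1^{v^*}=Y$ and $Y_0=\emptyset$. Since a linear map into $F_w$ is $\|\cdot\|_\infty$-continuous exactly when each scalar composition $v^*\circ(\cdot)$ is, and since every $Y_1^{v^*}$ now equals $Y$, the defining condition of $Y_c$ coincides with that of $\cap_{v^*\ne 0}Y_c^{v^*}$. Applying Lemma~\ref{lem6} to each scalar pair $v^*\circ T,\,v^*\circ S$ shows that $Y_c^{v^*}\cup Y_0^{v^*}=Y_c^{v^*}$ is closed; thus $Y_c$ is closed as an intersection of closed sets and $Y_d=Y_1\setminus Y_c=Y\setminus Y_c$ is open. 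For the parenthetical case, if the ranges of $T$ and $S$ contain the constants then, given $v^*\ne 0$, choosing $e\in E$ with $v^*(e)=1$ makes $v^*\circ T\widetilde e$ and $v^*\circ S\widetilde e$ nowhere zero, so $\cup_h\coz(v^*Th)=\cup_h\coz(v^*Sh)=Y$ and $Y_0^{v^*}=\emptyset$.

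For part~(ii) I would argue by contradiction with the injectivity of $T$. If $\phi(Y_1)$ were not dense, set $U=X\setminus\overline{\phi(Y_1)}$, a nonempty open set. Using the partition-of-unity form of regularity of $A(X)$, I choose $s\in A(X)$ with $s\ne 0$ and $\supp(s)\subseteq U$, together with a nonzero $e\in E$; then $f:=s\cdot\widetilde e\in A(X,E)$ is nonzero with $\supp(f)=\supp(s)\subseteq U$. The goal is to prove $Tf=0$, which will contradict injectivity. First note that the hypothesis $\cup_h\coz(Sh)=Y$ gives $Y_1=\cup_h\coz(Th)$, so any $y_0$ with $Tf(y_0)\ne 0$ automatically lies in $Y_1$ and thus has a well-defined $\phi(y_0)\in\phi(Y_1)\subseteq X\setminus U$, whence $\phi(y_0)\notin\supp(f)$.

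The heart of the argument is to transfer this to a single scalar pair and invoke Corollary~\ref{coro1}. Assume $Tf(y_0)\ne 0$ and choose $v^*$ with $v^*(Tf(y_0))\ne 0$; using $\cup_h\coz(Sh)=Y$ choose $g$ and $w^*$ with $w^*(Sg(y_0))\ne 0$. Then $y_0$ belongs to the set $Y_1$ associated with the jointly separating scalar pair $v^*\circ T,\,w^*\circ S$, so it has a support point $x_1$ for that pair. I would then run Step~I of Theorem~\ref{main2} with mixed functionals to show $x_1=\phi(y_0)$: letting $u^*$ be a functional with $y_0\in Y_1^{u^*}$ and $\phi_{u^*}(y_0)=\phi(y_0)$, if $x_1\ne\phi(y_0)$ I separate them by disjoint neighborhoods, use Lemma~\ref{lem2} to produce $f_1$ supported near $x_1$ with $v^*(Tf_1(y_0))\ne 0$ and $g_2$ supported near $\phi(y_0)$ with $u^*(Sg_2(y_0))\ne 0$, and derive a contradiction from the joint separation of $v^*\circ T,\,u^*\circ S$. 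With $x_1=\phi(y_0)$ in hand, Corollary~\ref{coro1} applied to the pair $v^*\circ T,\,w^*\circ S$ and the fact $v^*(Tf(y_0))\ne 0$ yields $\phi(y_0)=x_1\in\supp(f)\subseteq U$, contradicting $\phi(y_0)\notin U$. Hence $Tf=0$, so $f=0$, the desired contradiction. The parenthetical case follows at once, since a nonzero constant in the range of $S$ already forces $\cup_h\coz(Sh)=Y$.

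The step I expect to be the main obstacle is the mixed-functional version of Step~I, namely showing that the support point $x_1$ of $y_0$ computed from $v^*\circ T,\,w^*\circ S$ is exactly $\phi(y_0)$: a priori the support map depends on the chosen functionals, and Theorem~\ref{main2} only established consistency for the \emph{same} functional on both factors. The key point that unlocks it is that every mixed pair $v^*\circ T,\,w^*\circ S$ is again jointly separating, which lets Lemma~\ref{lem2} and the joint-separation contradiction run verbatim.
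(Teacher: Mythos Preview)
Your argument for part~(i) is the paper's: both observe $Y_c=\bigcap_{v^*}Y_c^{v^*}$ under the hypothesis and invoke Lemma~\ref{lem6} for each scalar pair. There is a slip in your parenthetical justification, however: $v^*$ lies in $F^*$, not $E^*$, so ``$v^*(e)=1$ for $e\in E$'' is a type error, and in any case $T\widetilde e$ need not be constant. The correct version uses the hypothesis on the \emph{range}: pick $u\in F$ with $v^*(u)\ne 0$ and $h\in A(X,E)$ with $Th=\widetilde u$; then $v^*\circ Th\equiv v^*(u)$ is nowhere zero, and similarly for $S$.

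For part~(ii) your proof is correct but longer than the paper's. The paper argues directly rather than by contradiction: given $x\in X$ and a neighbourhood $V$, build $f\in A(X,E)$ with $\supp(f)\subseteq V$ and $f(x)\ne 0$; injectivity yields $y$ with $Tf(y)\ne 0$, hence $y\in Y_1$, and one concludes $\phi(y)\in\supp(f)\subseteq V$. Your contrapositive framing is equivalent, but you then detour through mixed pairs $(v^*\circ T,\,w^*\circ S)$ and re-prove Step~I for them to pin down the support point. This works, yet it is unnecessary: by the remark immediately following Definition~2.1, given the two nonzero vectors $Tf(y)$ and $Sg(y)$ in $F$ one can choose a \emph{single} $v^*\in F^*$ with $v^*(Tf(y))\ne 0$ and $v^*(Sg(y))\ne 0$. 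This places $y$ in $Y_1^{v^*}$ for the diagonal pair with $v^*(Tf(y))\ne 0$, so Corollary~\ref{coro1} applies at once and $\phi(y)=\phi_{v^*}(y)\in\supp(f)$. That shortcut is essentially what the paper uses, collapsing your final two paragraphs to a line.
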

\begin{proof}
(i) Note that in this case we have $Y_c= \cap_{v^*\in F^*} Y^{v^*}_c  $. So the result followes from Lemma \ref{lem6}.

(ii) By hypotheses we have  $Y_1=\cup_{f \in A(X,E)} \coz (Tf)$. Now for each $x\in X$ and neighborhood  $V$ of $x$, choose, by regularity assumption on $A(X)$, a function $f\in A(X,E)$ with $f(x)\neq 0$, and $\supp(f) \subseteq V$. Since $f\neq 0$ and $T$ is injective, there exists $y\in Y$ such that $Tf(y)\neq 0$. Hence $y\in Y_1$. Since $Y_1= \cup_{v^*\in F^*} Y_1^{v^*}$, there exists $ v^*\in F^*$ such that 
\ref{coro1}, $\phi(y)=\phi_{v^*}(y) \in \supp(f) \subseteq V$. This
shows that $\phi(Y_1)$ is dense in $X$.
\end{proof}

\begin{theorem}\label{main2.2}
Let  $E$ be finite dimensional,  $A(X,E)$ be a subspace of $C(X,E)$ containing constants, $(A(X),\|\cdot\|_{A(X)})$ be a regular Banach function algebra on $X$ such that $A(X)\cdot A(X,E)
\subseteq A(X,E)$.  Assume, furthermore that there exists a
complete norm $\|\cdot\|$ on $A(X,E)$ with $\|f\|\ge
\|\|f\|_\infty$ such that $\|f \cdot g \|\leq \|g\|_{A(X)} \|f\|$
for each $f \in A(X,E)$ and $g\in A(X)$  and, moreover, $(A(X,E),
\|\cdot \|)$ have  (P) property.  Then for any jointly separating
linear operators $T, S: A(X,E) \longrightarrow C(Y,F)$
\begin{itemize}
\item[(i)] the maps $\Lambda^{T}$ and $\Lambda^{S}$ are continuous
functions from $Y_c$ into $L(E,F)$;

\item[(ii)] $\phi( \cup_{v^*\in F^*}Y_{d}^{v^*})$ is  finite and
consists of  limit points of $X$;

\item[(iii)] if one of $T$ or $S$ is injective and the
ranges of $S,T$ contain constant functions, then both $T$ and $S$ are continuous.
\end{itemize}
\end{theorem}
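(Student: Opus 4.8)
The plan is to reduce everything to the scalar results of Theorem \ref{main1.1}, applied to the pairs $v^*\circ T,v^*\circ S:A(X,E)\to C(Y)$, and to glue over $v^*\in F^*$, using $\dim E<\infty$ at every turn.

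For (i), fix a basis $e_1,\dots,e_m$ of $E$. For $y\in Y_c$ the operator $\Lambda^T_y$, $e\mapsto T\tilde e(y)$, is linear on the finite-dimensional space $E$, hence automatically bounded, so $\Lambda^T$ does map $Y_c$ into $L(E,F)$. Each coordinate map $y\mapsto\Lambda^T_y(e_j)=T\tilde{e_j}(y)$ is continuous from $Y$ to $F$ because $T\tilde{e_j}\in C(Y,F)$; since on $E$ the operator norm of $\Lambda^T_y$ is comparable to $\max_j\|\Lambda^T_y(e_j)\|$, a net $y_\alpha\to y_0$ in $Y_c$ forces $\|\Lambda^T_{y_\alpha}-\Lambda^T_{y_0}\|\to0$. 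Thus $\Lambda^T$, and likewise $\Lambda^S$, is continuous; this is the verbatim vector analogue of Theorem \ref{main1.1}(ii).

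For (ii), I would first invoke Theorem \ref{main1.1}(i) for each pair $v^*\circ T,v^*\circ S$ (its hypotheses are exactly the complete norm $\|\cdot\|$, the module inequality and the (P) property assumed here) to get that each $\phi_{v^*}(Y^{v^*}_d)$ is finite and consists of limit points, recalling $\phi_{v^*}=\phi$ on $Y^{v^*}_1$ by Step I of Theorem \ref{main2}. The only genuinely new point is that the union over $v^*$ stays finite, which I would get by running the gliding-hump argument of Theorem \ref{main1.1}(i) directly: were $\phi(\cup_{v^*}Y^{v^*}_d)$ infinite, pick $y_n\in Y^{v_n^*}_d$ with the $\phi(y_n)$ distinct and, after passing to a subsequence, all witnessing discontinuity of $T$; then there is $g_n\in A(X,E)$ with $g_n(\phi(y_n))=0$ and $v_n^*(Tg_n(y_n))\neq0$, and after rescaling $g_n$ and \emph{normalizing} $\|v_n^*\|=1$ one may assume $|v_n^*(Tg_n(y_n))|>n$. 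With pairwise disjoint neighbourhoods $U_n\ni\phi(y_n)$, a regularity partition $s_n$, the (P) property and completeness one forms $\psi_n=s_n\cdot(g_n-h_n)$ with $\|\psi_n\|<n^{-2}$ and $\psi=\sum_n\psi_n\in A(X,E)$; since $\psi=\psi_n$ on $U_n$, Corollary \ref{coro3} for $v_n^*\circ T,v_n^*\circ S$ yields $v_n^*(T\psi(y_n))=v_n^*(Tg_n(y_n))$, so that $\|T\psi(y_n)\|\ge|v_n^*(T\psi(y_n))|>n$, contradicting $T\psi\in C(Y,F)$. The normalization $\|v_n^*\|=1$ is precisely what lets the scalar contradiction survive in the vector range. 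Finally, each point of $\phi(\cup_{v^*}Y^{v^*}_d)$ is a limit point: if $\phi(y)$ were isolated then $\{\phi(y)\}$ is open and Corollary \ref{coro3} forces, for the $v^*$ with $y\in Y^{v^*}_d$, the representation at $y$, so that $y\in Y^{v^*}_c$ --- a contradiction.

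For (iii), assume $T$ injective and the ranges contain constants, so that $Y_0^{v^*}=\varnothing$ for all $v^*$ (by the lemma preceding Theorem \ref{main2} and Corollary \ref{rem}(i)); then $Y_1=Y$, $Y_c=\cap_{v^*}Y^{v^*}_c$ is closed, $Y_d=\cup_{v^*}Y^{v^*}_d$ is open, and by (ii) the set $X_d:=\phi(Y_d)$ is finite and consists of limit points. I would prove continuity (with respect to $\|\cdot\|$) of both maps by the closed graph theorem, i.e. by showing the separating spaces $\mathfrak S(T)=\{h\in C(Y,F):\exists f_j\to0,\ Tf_j\to h\}$ and $\mathfrak S(S)$ are trivial. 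Part (i) and Banach--Steinhaus give $\sup_{y\in Y_c}\|\Lambda^T_y\|<\infty$, so the representation on $Y_c$ together with $\|\cdot\|\ge\|\cdot\|_\infty$ shows every $h\in\mathfrak S(T)$ vanishes on $Y_c$; hence $\coz(h)\subseteq Y_d$ and $\phi(\coz(h))\subseteq X_d$, and the same holds for $\mathfrak S(S)$. Passing to the closed finite-codimensional ideal $J=\{f:f|_{X_d}=0\}$ (the quotient $A(X,E)/J$ being finite-dimensional) localizes the remaining problem to the finitely many fibres over $X_d$.

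The main obstacle, and the only place injectivity is really used, is to show $h\equiv0$ on $Y_d$, i.e. that neither map can blow up along functions concentrated at a point $x_i\in X_d$. A naive gliding-hump is blocked here, because any $f$ witnessing a large value $Tf(y_0)$ at $y_0\in Y_d$ must satisfy $x_i=\phi(y_0)\in\supp(f)$ by Corollary \ref{coro1}, so the witnesses cannot be supported disjointly and the summation argument does not close. The plan is to proceed as in the automatic continuity technique of \cite{f}: exploit that $x_i$ is a \emph{limit} point to choose distinct $w_n\to x_i$ off $X_d$ with disjoint neighbourhoods, use regularity and the module structure to build genuinely disjointly supported perturbations whose $S$-images are controlled on the nearby $Y_c$-fibres (where the representation and the bound on $\|\Lambda^S_y\|$ apply), and combine the joint separating relation between $T$ and $S$ with the injectivity of $T$ to turn an unbounded family into a single $\psi\in A(X,E)$ with $T\psi\notin C(Y,F)$. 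Since $(S,T)$ is jointly separating whenever $(T,S)$ is, the same coupling yields $\mathfrak S(S)=0$ as well, and both $T$ and $S$ are then continuous. I expect this coupling of injectivity with the joint separating property at the finitely many exceptional limit points to be the most delicate step of the whole theorem.
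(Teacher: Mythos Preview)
Your treatments of (i) and (ii) are essentially correct. For (ii) the paper takes a slightly cleaner route: rather than re-running the gliding hump with a varying family of normalized functionals $v_n^*$, it observes that the set $\{Tg_n(y_n)\}$ is norm-unbounded in $F$, hence weakly unbounded, so a \emph{single} $v^*\in F^*$ satisfies $\sup_n|v^*(Tg_n(y_n))|=\infty$; this forces $\phi(Y_d^{v^*})$ to be infinite and contradicts Theorem~\ref{main1.1}(i) directly. Your version works too, but the reduction to one functional avoids rebuilding the whole $\psi$.

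For (iii), however, you are making the problem harder than it is, and the hard step you isolate is not actually needed. The paper's key observation, which you miss, is that under the hypotheses one has $Y_d=\emptyset$. The argument runs as follows: since the ranges contain constants, $Y=Y_1$ and $Y_c$ is closed; by injectivity (Corollary~\ref{rem}(ii)) the range $\phi(Y)=\phi(Y_c)\cup\phi(Y_d)$ is dense in $X$; but $\phi(Y_d)$ is finite and consists only of limit points by (ii), so $\phi(Y_c)$ itself is dense, and since $Y_c$ is compact and $\phi$ continuous, $\phi(Y_c)=X$. From this the paper concludes $Y_d=\emptyset$, i.e.\ $Y_c=Y$. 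Once $Y_c=Y$, the representation $Tf(y)=\Lambda^T_y(f(\phi(y)))$ holds everywhere, and the closed graph argument is a two-line computation: if $f_n\to f$ in $(A(X,E),\|\cdot\|)$ and $Tf_n\to g$ in $C(Y,F)$, then $f_n(\phi(y))\to f(\phi(y))$ in $E$ (since $\|\cdot\|\ge\|\cdot\|_\infty$), so $Tf_n(y)=\Lambda^T_y(f_n(\phi(y)))\to\Lambda^T_y(f(\phi(y)))=Tf(y)$, whence $g=Tf$.

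Your separating-space route is not wrong in spirit, but the ``main obstacle'' you describe --- controlling $\mathfrak S(T)$ on $Y_d$ via a coupling of injectivity and joint separation at the exceptional limit points --- is a plan, not a proof, and the details you sketch (disjoint perturbations near $w_n\to x_i$, bounds on $\Lambda^S$ on nearby $Y_c$-fibres) do not obviously close. In particular, the difficulty you yourself identify (witnesses at $y_0\in Y_d$ must have $\phi(y_0)\in\supp(f)$, obstructing disjoint supports) is precisely what the paper sidesteps by proving there are no such $y_0$ at all. Replacing your entire treatment of the $Y_d$ part with the density--compactness argument above would complete (iii) cleanly.
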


\begin{proof}
(i) First we  note that $\Lambda^{T}_y , \Lambda^{S}_y \in L(E,F)$ for each $y\in Y_c$, since $E$ is finite dimensional. Let $\{y_\alpha\}$ be a net in $Y_c$ converging to a point $y_0\in Y_c$. Then for any $e\in E$, by the definition of $\Lambda^{T}_y$ and continuity of $T\tilde{e}$, the net $\{\Lambda^{T}_{y_\alpha}(e)\}$ converges to $\Lambda^{T}_{y_0}(e)$, i.e $\| \Lambda^{T}_{y_\alpha}(e)- \Lambda^{T}_{y_0}(e)\| \to 0$. Since $E$ is of finite dimension, this easily implies that $\|\Lambda^{T}_{y_\alpha}-\Lambda^{T}_{y_0}\| \to 0$ in $L(E,F)$, that is  $\Lambda^{T}: Y_c \lo L(E,F)$ is continuous.  Continuity of  $\Lambda^{S}$ can be proven by a similar argument.

(ii) Put $Y_{d_1}= \cup_{v^*\in F^*}Y_{d}^{v^*}$ and assume, on the contrary, that there exists a sequence $\{y_n\}$ in $Y_{d_1}$ such that $\{\phi(y_n)\}$ is a sequence of distinct elements. $\Lambda^T_{y_n}(f_n(\phi(y_n)))$. Consider $g_n$ as an appropriate
multiple of $f_n-\widetilde {f_n(\phi(y_n)}$ such that $\|Tg_n(y_n)\|>n$. Clearly, $g_n(\phi(y_n))=0$. Now the subset $F_0=\{Tg_n(y_n): n\in \BN\}$ of $F$ is unbounded  (and consequently weakly unbounded)  so there exists  $v^*\in F^*$ such that $\sup\{|v^*(u)|: u\in F_0\}=\infty $. Thus $v^*(Tf_n(y_n))
\neq v^*( \Lambda^T_{y_n}(f_n(\phi(y_n)))$ for
 infinitely many $n\in \BN$, i.e., $\phi(Y_d^{v^*})$ is infinite, since for each $v^* \in F^*$ we have $v^*\circ \Lambda^{T}_y =\Lambda^{v^*,T}_y$. But this contradicts to Theorem \ref{main1} (iii). Finally $\phi(Y_d)$
 consists of limit points of $X$ since each $\phi(Y_d^{v^*})$ does.

(iii) We note that since the ranges of $T$ and $S$ contain constant functions, by Corollary \ref{rem}, we have $Y=Y_1=Y_d\cup Y_c$ and $Y_c$ is closed. Also, since for each $v^*\in F^*$ we have $Y_0^{v^*}=\varnothing$, one can see easily that $Y_d= \cup_{v^*\in F^*}Y_{d}^{v^*}=Y_{d_1}$. On the other hand, since $T$ or $S$ is injective, by Corollary \ref{rem},  the set $\phi(Y)=\phi(Y_d\cup Y_c)=\phi(Y_d)\cup \phi(Y_c)$ is dense in $X$. Since  $\phi(Y_d)$ is finite and consists of limit points of $X$, we conclude that $\phi(Y_c) $ is dense in $X$. Now the compactness of $Y_c$ and continuity of $\phi$ implies that $\phi(Y_c)=X$ and $Y_d=\emptyset$, that is $Y_c=Y$.

Finally, to prove that $T$ is continuous, let $\{f_n\}$ be a sequence in $A(X,E)$ such that $f_n\to f$ in $(A(X,E),\|\cdot\|)$ and $T(f_n) \to g$ in $C(Y,F)$. Since $\|\cdot \|_\infty\le
\|\cdot \|$, for each $y\in Y$ we have $f_n(\phi(y))\to
f(\phi(y))$ in $E$ and the continuity of  $\Lambda^T_y$ implies that, $\Lambda_y^T(f_n(\phi(y))\to \Lambda_y^T(f(\phi(y)))$ in $F$. On the other hand by (\ref{eq4}) we have $\Lambda_y^T(f_n(\phi(y)))=Tf_n(y)$ and $\Lambda_y^T(f(\phi(y)))=Tf(y)$. Hence $\|Tf_n(y)-Tf(y)\|\to 0$, for each $y\in Y$. This clearly implies that $Tf(y)=g(y)$ for each $y\in Y$. Since $A(X,E)$ is a Banach space, the Closed Graph Theorem implies that $T$ is continuous.

Continuity of $S$ will be proven in a similar way.
\end{proof}
\section{Applications of the results}
Let $X$ be a compact Hausdorff space and $E$ be a Banach space over $\Bbb K=\Bbb C \ or \ \Bbb R$. Clearly $C(X,E)$ itself satisfies the required conditions in Theorems \ref{main1} and
\ref{main2} and it also satisfy the hypotheses of Theorems
\ref{main1.1} and \ref{main2.2} whenever $E$ is finite
dimensional.

For applications of the results we give some examples of subspaces $A(X,E)$ satisfying the hypotheses of our main Theorems.

\begin{example}\label{exam}

(i) Let  $(X,d)$ be a compact metric space and $E$ be a Banach space. For $\alpha \in (0,1]$, we denote the space of $E$-valued Lipschitz functions of order $\alpha$ on $X$ by $\Lip^\alpha(X , E)$. That is, a continuous function $f : X \lo E$ belongs to $\Lip^\alpha(X,E)$ if and only if $$p_\alpha(f)= \sup \{  \frac{\|f(x) - f(y)\|}{d^\alpha(x,y)}: x,y \in X, x\neq y\}<\infty. $$ Then $\Lip^\alpha(X , E)$ is a Banach space with respect to the norm $\|f\|_\alpha=p_\alpha(f) +
\|f\|_\infty$, $f\in \Lip^\alpha(X,E)$.  The closed subspace
$\lip^\alpha(X , E)$ of $\Lip^\alpha(X , E)$ consists of those functions $f \in \Lip^\alpha(X , E)$  such that $$ \lim_{d(x,y)
\lo 0} \frac{\|f(x) - f(y)\|}{d^\alpha(x,y)}=0.$$ We write
$\Lip^\alpha(X)$ and $\lip^\alpha(X)$ for complex-valued case. One can see easily that $\Lip(X,E) \subseteq \lip^\alpha(X,E)$, for each $\alpha \in (0,1)$.

Let $0<\alpha<1$ and set $A(X,E)=\lip^\alpha(X,E)$ and $A(X)=\lip^\alpha(X)$. Then $(A(X),\|\cdot\|_\alpha)$ is a Banach function algebra on $X$ such that $A(X)\cdot A(X,E) \subseteq A(X,E)$ and $\|f\cdot g \|_{\alpha}\leq \|f\|_{\alpha}
\|g\|_{\alpha}$ for each $f\in A(X)$ and $g\in A(X,E)$.

For any compact subset $K$ of $X$ and open neighborhood $U$ containing it let the function $f_{K,U}$ on $X$ be defined by $f_{K,U}(x)=\frac{d(x, X\setminus U)}{d(x, K)+d(x, X\setminus U)}.$ Then by Lemma 2.2 in \cite{vw} $f_{K,U} \in \Lip(X)
\subseteq \lip^\alpha(X)$ with $ p_\alpha(f_{K,U}) \leq
\max\{\frac{1}{d(K, X\setminus U)}, 1\}$. Clearly $f_{K,U}=1$ on
$K$, $\coz(f_{K,U}) \subseteq U$ and $0\leq f_{K,U} \leq 1$. Hence $A(X)$ is a regular Banach function algebra on $X$.

We now show  that  $(A(X,E), \|\cdot \|_\alpha)$ satisfies Ditkin's condition with respect to $A(X)$. To do this,  let $f\in
\lip^\alpha(X, E)$ and $f(x_0)=0$ for some $x_0 \in X$. Let
$\varepsilon >0$ be arbitrary. Then there exists small enough $\delta >0$ such that for each $y \in X$ with $d(x_0, y) < \delta$ we have $\|f(y)\| < \epsilon$ and for each distinct points $x,y
\in X$ with  $d(x,y) <\delta$ we have $\frac{\|f(x) -
f(y)\|}{d^\alpha(x,y)}<\varepsilon$.  Let $V_\varepsilon$ and $U_\varepsilon$ be subsets of $X$ defined by $$V_\varepsilon =\{x\in X: d(x, x_0)<\frac{1}{4}\delta\},  \; {\rm and}\; \; U_\varepsilon =\{x \in X: d(x, x_0)<\frac{1}{2}\delta\}.$$ Setting $g_\varepsilon=1-f_{\overline{V_\varepsilon}, U_\varepsilon}$ we see that $ p_\alpha(g_\varepsilon) \leq
\max\{\frac{1}{d(\overline{V_\varepsilon}, X\setminus
U_\varepsilon)}, 1\}$, $g_\varepsilon=1$ on $\overline{V_\varepsilon}$, $\coz(g_\varepsilon) \subseteq U_\varepsilon$ and $0 \leq g_\varepsilon \leq 1$. A simple verification shows that $\|g_\varepsilon \cdot f -f\|_\alpha<5\varepsilon$.

\vspace*{.3cm} (ii) For the closed unit interval $\Bbb I=[0,1]$, a
function $f: \Bbb I\longrightarrow E$ is absolutely continuous if, for each $\varepsilon >0$, there exists $\delta >0$ such that $\Sigma_{i=1}^n\|f(t_i)-f(s_i)\| <\varepsilon$ whenever $\{(s_i,t_i)\}_{i=1}^n$ is a family of disjoint open subintervals of $\Bbb I$ with $\Sigma_{i=1}^n|t_i-s_i| < \delta $. We denote the set of absolutely continuous $E$-valued functions on $\Bbb I$ by ${\rm AC}(\Bbb I,E)$. The complex-valued case is denoted by ${\rm AC}(\Bbb I)$.  Given $f\in {\rm AC}(\Bbb I,E)$, we define the variation of $f$ on $I$ as $${\rm var}(f, \Bbb I)=\sup \{ \Sigma_{i=1}^n\|f(t_i)-f(t_{i-1})\| : 0=t_0 < t_1 < ... <t_n=1 \}.$$ Then  ${\rm AC}(\Bbb I,E)$ is a Banach space with respect to the norm $\| \cdot \|_{AC}$ defined by $$ \|f \|_{AC}= \|f\|_\infty + {\rm var}(f, I), \ \ \ \ \ \ (f \in {\rm AC}(\Bbb I,E)).$$ By \cite[Theorem 4.4.35]{d}, ${\rm AC}(I)$ is a regular Banach function algebra on $\Bbb I$ and it is easy to see that for each $f \in {\rm AC}(\Bbb I), g \in {\rm AC}(\Bbb I,E)$ we have $f \cdot g \in {\rm AC}(\Bbb I,E)$ and $\|f \cdot g
\|_{AC}\leq \|f \|_{AC} \|g \|_{AC} $. Following the same argument
as in \cite[Theorem 4.4.35(iii)]{d} we can easily see that $({\rm AC}(\Bbb I,E), \| \cdot \|_{AC})$ satisfies Ditkin's condition with respect to ${\rm AC}(\Bbb I)$. 

\vspace*{.3cm} (iii) Let $C^1(\Bbb I, E)$ be the space of all
$E$-valued continuously differentiable functions on the closed unit interval, that is the space of all functions $f : \Bbb I \lo E$ such that the derivative $f'(x)$, defined by $f'(x)=\lim _{h
\lo 0} \frac{f(x+h)-f(x)}{h}$, exists for each $x\in \Bbb I$, and
the function $f': \Bbb I \lo E$ is continuous. By \cite{adm}, the mean value Theorem is satisfied for the functions in $C^1(\Bbb I, E)$ so, a simple verification shows that $C^1(\Bbb I, E)$  is a Banach space with respect to the norm $\| \cdot \|$ defined by $$\|f \|= \|f\|_\infty + \|f'\|_\infty, \ \ \ \ \ (f \in C^1(\Bbb I, E)).$$ The complex-valued case is denoted by $C^1(\Bbb I)$. By \cite[Theorem 4.4.1]{d},  $C^1(\Bbb I)$, is a regular Banach function algebra. It is easy to see that for each $f \in C^1(\Bbb I)$ and $g \in C^1(\Bbb I, E)$ we have $f \cdot g \in C^1(\Bbb I, E)$. Since for any disjoint closed subsets $K$ and $F$ of $\Bbb I$, there exists $h\in C^1(\Bbb I)$ with $0\le h \le 1$, $h=1$ on $K$ and $h=0$ on $F$ it follows easily that for each $f\in C^1(\Bbb I,E)$ and $t_0\in \Bbb I$ with $f(t_0)=0$, there exists a sequence $\{f_n\}$ in $C^1(\Bbb I,E)$ such that each $f_n$ vanishes on a neighborhood of $t_0$ and $\|f_n-f\|_\infty
\to 0$ as $n \to \infty$. Hence $(C^1(\Bbb I,E),
\|\cdot\|_\infty)$ has property (P).
\end{example}

Using appropriate modifications in the definition of support points and the arguments in Sections 3 and 4 one can see that the same results hold for a pair of maps $T: A(X) \lo C(Y)$ and $S: A(X,E)\lo C(Y,F)$ satisfying
\[ f\cdot g=0 \Rightarrow Tf \cdot Sg=0 \;\;\;\; (f\in A(X), g\in
A(X,E)).\] Hence we have the next corollary.

\begin{corollary} Let $E$ and $F$ be real or complex Banach spaces, $A(X,E)$ be a subspace of $C(X,E)$ containing constants,
$A(X)$ be a regular Banach function algebra on $X$ such that $A(X)\cdot A(X,E) \subseteq A(X,E)$ and let $(A(X,E),
\|\cdot\|_\infty)$ and $(A(X), \|\cdot\|_\infty)$ have (P)
property. Let $T: A(X) \lo C(Y)$ and $S: A(X,E) \longrightarrow C(Y,F)$ be continuous real-linear jointly separating  maps such that the ranges of $T$ and $S$ contain constants. Then there exist a continuous map $\phi: Y \longrightarrow X$, a map $\Lambda^S: Y
\to L(E, F)$ continuous with respect to strong operator topology such that
\[Tf(y)= {\rm Re} (f(\phi(y)) \, T1(y)+ {\rm Im} (f(\Phi(y)))Ti(y)\;\;\;\;\;(f\in A(X), y\in Y) \]
and
\[ Sg(y)=\Lambda^S_y(g(\phi(y))\;\;\;\;\;\;(g\in A(X,E), y\in Y). \]
\end{corollary}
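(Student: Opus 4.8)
The plan is to reduce this corollary to the machinery already established for the vector-valued case, applied to the modified setting of a pair $T:A(X)\lo C(Y)$ and $S:A(X,E)\lo C(Y,F)$ satisfying the implication $f\cdot g=0\Rightarrow Tf\cdot Sg=0$. As noted in the paragraph preceding the statement, the definitions of support points and the arguments of Sections~3 and~4 carry over verbatim after minor modifications, so I may invoke the analogues of Theorems~\ref{main2} and~\ref{main2.2} here. In particular, since $T$ and $S$ are continuous, real-linear, jointly separating, and their ranges contain constant functions, the hypotheses force $Y_c=Y$ and $Y_d=\emptyset$; this yields a single continuous support map $\phi:Y\lo X$ together with representations of $Tf(y)$ and $Sg(y)$ in terms of the values $f(\phi(y))$ and $g(\phi(y))$.

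First I would handle $S$, which is the direct vector-valued case. Applying the (modified) Theorem~\ref{main2} to the pair $T,S$ gives a continuous $\phi$ and, because $S$ is continuous with range containing constants, a map $\Lambda^S:Y\lo L(E,F)$ with $Sg(y)=\Lambda^S_y(g(\phi(y)))$ for all $g\in A(X,E)$ and $y\in Y$; here $\Lambda^S_y(e)=S\widetilde{e}(y)$. The continuity of $\Lambda^S$ with respect to the strong operator topology follows exactly as in Step~III and the continuity argument of Theorem~\ref{main2.2}(i): for fixed $e\in E$, the map $y\mapsto\Lambda^S_y(e)=S\widetilde{e}(y)$ is continuous into $F$ because $S\widetilde{e}\in C(Y,F)$, which is precisely strong-operator continuity of $y\mapsto\Lambda^S_y$.

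Next I would treat $T$, where the scalar target $C(Y)$ interacts with the fact that $T$ is only real-linear. Applying the representation to $T$ gives $Tf(y)=\Lambda^T_y(f(\phi(y)))$ with $\Lambda^T_y:\BC\lo\BC$ an $\BR$-linear functional, namely $\Lambda^T_y(\lambda)=T\widetilde{\lambda}(y)$ for $\lambda\in\BC$. Any $\BR$-linear functional on $\BC$ decomposes against the real and imaginary parts of its argument: writing $\lambda=\R\lambda+i\,{\rm Im}\,\lambda$ and using $\BR$-linearity, $\Lambda^T_y(\lambda)={\R\lambda}\,\Lambda^T_y(1)+{\rm Im}(\lambda)\,\Lambda^T_y(i)$, where $\Lambda^T_y(1)=T1(y)$ and $\Lambda^T_y(i)=Ti(y)$. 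Substituting $\lambda=f(\phi(y))$ yields the asserted formula
\[
Tf(y)={\rm Re}(f(\phi(y)))\,T1(y)+{\rm Im}(f(\phi(y)))\,Ti(y),
\]
valid for all $f\in A(X)$ and $y\in Y$.

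The step I expect to require the most care is verifying that the Section~3--4 machinery genuinely transfers to the mixed pair $(T,S)$ with $T$ scalar-valued on $A(X)$ and $S$ vector-valued on $A(X,E)$. The support-point arguments (Lemmas~\ref{lem1}--\ref{lem3} and Corollaries~\ref{coro1}--\ref{coro3}) rely on producing, for disjoint cozero sets, a contradiction via joint separation; one must check that the asymmetry between the domains $A(X)$ and $A(X,E)$ does not break these arguments—in particular that $Y_1$, support points, and the single support map $\phi$ remain well-defined when the test functions defining $T$-support lie in $A(X)$ while those for $S$-support lie in $A(X,E)$. Once this is confirmed, the continuity of $\phi$, the equality $Y_c=Y$ forced by continuity and the constant-containing ranges, and the two representation formulas follow by the same reasoning as before, and the real-linear decomposition of $\Lambda^T_y$ completes the proof.
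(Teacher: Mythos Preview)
Your proposal is correct and matches the paper's approach: the paper does not give a separate proof of this corollary but simply states that, with the obvious modifications to support points, the arguments of Sections~3 and~4 go through for the mixed pair $(T,S)$, and then records the corollary. Your write-up is a faithful expansion of that sketch, including the explicit $\mathbb{R}$-linear decomposition $\Lambda^T_y(\lambda)=\mathrm{Re}(\lambda)\,T1(y)+\mathrm{Im}(\lambda)\,Ti(y)$ and the strong-operator continuity of $\Lambda^S$ via $y\mapsto S\tilde e(y)$, and you correctly flag the one point requiring care, namely that the asymmetric domains $A(X)$ and $A(X,E)$ do not disturb the support-point machinery.
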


\begin{corollary}\label{coro}
Let $(A(X),\|\cdot\|_{A(X)})$  be a regular Banach function algebra on $X$ having (P) property. Let $A(Y)$ be a Banach function algebra on $Y$ and $T , S : A(X) \longrightarrow A(Y)$ be complex-linear jointly separating bijections. Then $X$ and $Y$ are homeomorphic and $T , S$ are jointly biseparating, that is $T^{-1} , S^{-1}$ are also jointly separating.
\end{corollary}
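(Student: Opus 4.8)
The plan is to specialise the structure theory of Sections 3--4 to the scalar case $E=F=\BC$, where $A(X,E)=A(X)$ and $A(Y,F)=A(Y)\subseteq C(Y)$, and then to use bijectivity to identify the support map with a homeomorphism. First I would verify the standing hypotheses: $A(X)$ contains constants and $A(X)\cdot A(X)\subseteq A(X)$ because it is an algebra; the norm $\|\cdot\|_{A(X)}$ is complete and submultiplicative and, as for every Banach function algebra, dominates $\|\cdot\|_\infty$, so property (P) for $\|\cdot\|_{A(X)}$ automatically yields property (P) for $\|\cdot\|_\infty$. Since $A(Y)$ is a function algebra and $T,S$ are onto, the ranges of $T$ and $S$ contain the constants and $\bigcup_f\coz(Tf)=\bigcup_f\coz(Sf)=Y$; hence $Y_1=Y$, $Y_0=\emptyset$, and by Corollary \ref{rem}(i) the set $Y_c$ is closed and $Y_d$ is open.

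The decisive step is automatic continuity. As $\BC$ is finite dimensional, $T$ is injective and both ranges contain constants, Theorem \ref{main2.2}(iii) applies and gives that $T$ and $S$ are continuous. Feeding this into the final assertion of Theorem \ref{main2} (again using that the ranges contain constants) yields $Y_d=\emptyset$, $Y_c=Y$, a continuous support map $\phi\colon Y\lo X$, and the global representation
\[ Tf(y)=h_T(y)\,f(\phi(y)),\qquad Sf(y)=h_S(y)\,f(\phi(y))\qquad (f\in A(X),\ y\in Y),\]
where each $\Lambda^T_y,\Lambda^S_y\in L(\BC,\BC)$ is identified with the scalar it multiplies by, namely $h_T(y)=T\tilde{1}(y)$ and $h_S(y)=S\tilde{1}(y)$, both lying in $A(Y)$. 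Because $\bigcup_f\coz(Tf)=Y$, the weight $h_T$ has no zero (otherwise $Tf(y)=0$ for all $f$ at such a $y$), and similarly $h_S$ is zero-free.

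Next I would prove that $\phi$ is a homeomorphism. Surjectivity follows from Corollary \ref{rem}(ii): $\phi(Y)$ is dense, and it is also compact, hence closed, so $\phi(Y)=X$. For injectivity, suppose $\phi(y_1)=\phi(y_2)=x$ with $y_1\neq y_2$. The representation gives $h_T(y_2)\,Tf(y_1)=h_T(y_1)\,Tf(y_2)$ for all $f\in A(X)$, and since $T$ is onto this becomes $h_T(y_2)\,g(y_1)=h_T(y_1)\,g(y_2)$ for all $g\in A(Y)$. Choosing $g\in A(Y)$ with $g(y_1)=0\neq g(y_2)$ (possible since $A(Y)$ separates points and contains constants) and recalling $h_T(y_1)\neq 0$ gives a contradiction. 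Thus $\phi$ is a continuous bijection from the compact space $Y$ onto the Hausdorff space $X$, hence a homeomorphism, and in particular $X$ and $Y$ are homeomorphic.

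Finally the biseparating conclusion is formal. Writing $\psi=\phi^{-1}$ and inverting the representation, which is legitimate since $h_T,h_S$ are zero-free, gives $T^{-1}g=(g\circ\psi)/(h_T\circ\psi)$ and $S^{-1}g=(g\circ\psi)/(h_S\circ\psi)$, so that $\coz(T^{-1}g)=\coz(S^{-1}g)=\phi(\coz(g))$ for every $g\in A(Y)$. If $\coz(g_1)\cap\coz(g_2)=\emptyset$ then, $\phi$ being injective, $\coz(T^{-1}g_1)\cap\coz(S^{-1}g_2)=\phi\big(\coz(g_1)\cap\coz(g_2)\big)=\emptyset$, so $T^{-1}$ and $S^{-1}$ are jointly separating. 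I expect the real work to be concentrated in the passage to a global weighted-composition form: a priori the representation of Theorem \ref{main2} holds only on $Y_c$, and it is the combination of bijectivity and the finite dimensionality of $\BC$, through the automatic-continuity result Theorem \ref{main2.2}(iii), that forces $Y_c=Y$ and underlies the zero-freeness of the weights; once these are secured, the homeomorphism and the biseparating property follow almost immediately.
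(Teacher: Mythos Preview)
Your proof is correct and follows the same overall strategy as the paper: specialise the structure theory to $E=F=\BC$, invoke the finite-dimensional automatic-continuity result to force $Y_c=Y$, obtain the weighted-composition representation $Tf=h_T\cdot(f\circ\phi)$, $Sf=h_S\cdot(f\circ\phi)$ with nonvanishing weights, and then show that $\phi$ is a homeomorphism.

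The interesting difference lies in the order and method for the last two conclusions. The paper first proves that $T^{-1},S^{-1}$ are jointly separating directly from the representation and the \emph{surjectivity} of $\phi$ (if $Tf\cdot Sg=0$ then $(f\circ\phi)(g\circ\phi)=0$ on $Y$, hence $f\cdot g=0$ on $X$), and only afterwards derives injectivity of $\phi$ using this biseparating property together with functions $g_i\in A(Y)$ supported in small neighbourhoods of $y_i$. You instead prove injectivity of $\phi$ first, using only that $T$ is onto and that $A(Y)$ separates points, and then deduce biseparating from the explicit inversion formula once $\phi$ is known to be a bijection. Your route has the advantage that the injectivity step needs nothing more than the point-separation built into the definition of a function algebra, whereas the paper's injectivity argument, as written, requires bump functions in $A(Y)$ (i.e.\ some regularity of $A(Y)$), which is not part of the stated hypotheses. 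Conversely, the paper's biseparating argument is slightly more economical since it does not require having established that $\phi$ is injective.
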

\begin{proof}
We first note that since $A(X)$ is a Banach function algebra, $\|f\|_\infty \leq \|f\|_{A(X)}$, for each $f\in A(X)$. Hence Theorems \ref{main2} and \ref{main2.2} hold with $E=F=\Bbb C$. Thus there exists a continuous function $\phi: Y_c \longrightarrow X$ such that
\begin{equation} \label{eq6}
Tf(y)=T1(y) f(\phi(y)) \ \ \ \ \  {\rm and} \ \ \ \ \ Sf(y)=S1(y) f(\phi(y));
\end{equation}

for each $f\in A(X)$ and $y\in Y_c$. Since $T, S$ are bijections, the first paragraph of the proof of \ref{main2.2} (iii) shows that $Y_c=Y$ and $\phi : Y \lo X$ is surjective.

To show that $T^{-1} , S^{-1}$ are jointly separating, let $Tf
\cdot Sg =0$ for some $f, g \in A(X)$. By (\ref{eq6}),   $T1$ and
$S1$ are nonvanishing functions, hence we have $(f\circ \phi) (g
\circ \phi) =0$ on $Y$. Since $\phi$ is surjective, $f \cdot g =0$
on $X$ that is, $T^{-1}$ and $S^{-1}$ are jointly separating.

Now we show that $\phi$ is a homeomorphism. To do so, it suffices to show that $\phi$ is injective. Let $y_1 , y_2$ be elements in $Y$ such that $\phi(y_1) = \phi(y_2)$. If $y_1 , y_2$ are distinct then we can find neighborhoods $V_{1}$ and $V_{2}$ of $y_1$ and $y_2$ respectively, with disjoint closures. For $i=1, 2$, let $g_i
\in A(Y)$ be such that $g_i(y_i)=1$ and ${\rm supp} g_i \subseteq
V_i$. Hence $g_1 \cdot g_2 =0$. Let $f_i\in A(X)$ be such that $Tf_1=g_1$ and $Sf_2=g_2$. Since $T^{-1}$, $S^{-1}$ are jointly separating we must have $f_1 \cdot f_2 =0$. On the other hand, by (\ref{eq6}) we have $$Tf_1(y_1)=T1(y_1) f_1(\phi(y_1)) \ \ \ \ \ {\rm  and} \ \ \ \ \ Sf_2(y_2)=S1(y_2) f_2(\phi(y_2))$$ which implies that $f_i(\phi(y_i))\neq 0$ since $g_i(y_i)\neq 0$, for $i=1, 2$, and $T1 , S1$ are nonvanishing functions. But this contradicts to the fact that $f_1 \cdot f_2 =0$. So $\phi$ is  injective.
\end{proof}


\begin{thebibliography}{99}

\bibitem{abcev} J. Alaminos, M. Bre\^{s}ar, M. \^{C}erne, J. Extremera, A. R.
Villena, \textit{Zero product preserving maps on $C^1[0,1]$}, J. Math. Anal. Appl. \textbf{347} (2008), 472--481.

\bibitem{aev} J. Alaminos, J. Extremera, A. R. Villena, \textit{Zero product
preserving maps on Banach algebras of  Lipschitz functions}, J. Math. Anal. Appl. \textbf{369} (2010), 94--100.

\bibitem{abn} J. Araujo, E. Beckenstein and L. Narici, \textit{Biseparating maps and homeomorphic real compactifications}, J. Math. Anal. Appl. \textbf{192} (1995), no. 1, 258-265.

\bibitem{aj} J. Araujo, K. Jarosz, \textit{Automatic continuity of biseparating maps},
 Studia. Math. \textbf{155}, (2003),  no. 3, 231--239.


\bibitem{adm} A. K. Aziz, J. B. Diaz, W. Mlak, \textit{On a mean value theorem for
vector-valued functions, with applications to uniqueness theorems for right-hand-derivative equations}, J. Math. Anal. Appl. \textbf{16}, 1966, no. 2, 302--307.


\bibitem{bnt} E. Beckenstein, L. Narici and A. R. Todd, \textit{Automatic continuity of
 linear maps on spaces of continuous functions}, Manuscripta Math. \textbf{62} (1988), 257--275.

\bibitem{d} H. G. Dales, Banach Algebras and Automatic Continuity, London Mathematical Society,
Monograph 24, Clarendon Press, Oxford, 2000.

\bibitem{du} L. Dubarbie, {\textit Separating maps between spaces
of vector-valued absolutely continuous functions}, Canad. Math. Bull. \textbf{53} (2010),  466--474.


\bibitem{f} J. J. Font, \textit{Automatic continuity of certain isomorphisms between regular Banach function algebras},
Glasgow  Math. J. \textbf{39} (1997), no. 3, 333--343.

\bibitem{fh} J. J. Font and S. Hernandez, \textit{On separating maps between
locally compact spaces}, Arch. Math. \textbf{63} (1994), 158--165.


\bibitem{gjw} H. L. Gau, J. S. Jeang and N. C. Wong, \textit{Biseparating linear maps between
continuous vector-valued function spaces}, J. Aust. Math. Soc.
\textbf{74} (2003), 101--109.

\bibitem{hbn} S. Hernandez, E. Beckenstein and L. Narici, \textit{Banach--Stone theorems and
separating maps}, Manuscr.  Math. \textbf{86} (1995), 409--416.

\bibitem{jr} J. E. Jamison and M. Rajagopalan, \textit{Weighted composition operator on $C(X, E)$},
J.  Operator Theory \textbf{20} (1988), 307--317.


\bibitem{j} K. Jarosz, \textit{Automatic continuity of separating linear isomorphisms},
 Canad. Math. Bull. \textbf{33}, (1990), 139--144.


\bibitem{jw} J. S. Jeang and N. C. Wong, \textit{Weighted composition operators of $C_0(X)$'s},
J. Math. Anal. Appl. \textbf{201} (1996), 981--993.

\bibitem{v} A. Jim\'enez-Vargas, \textit{Disjointness preserving operators between
little Lipschitz algebras}, J. Math. Anal. Appl. \textbf{377} (2008), 984--993.

\bibitem{vw} A. Jim\'enez-Vargas and Ya-Shu Wang, \textit{Linear biseparating maps between vector-valued little Lipschitz function spaces},
Acta Math. Sin. (Engl. Ser.) \textbf{26} (2010), no. 6, 1005--1018.

\bibitem{vvw} A. Jim\'enez-Vargas, M. V. Vallecillos and Ya-Shu Wang, \textit{Banach--Stone theorems
for vector-valued little Lipschitz functions}, Publ. Math. Debrecen \textbf{74} (2009), 81--100.


\bibitem{kn} R. Kantrowitz and M. M. Neumann, \textit{Disjointness preserving and local operators on algebras of differentiable functions},
Glasgow  Math. J. \textbf{43} (2001), no. 2, 295--309.


\end{thebibliography}
\end{document}